\theoremstyle{plain}% Theorem-like structures provided by amsthm.sty
\newtheorem{theorem}{Theorem}[section]
\newtheorem{lemma}[theorem]{Lemma}
\newtheorem{proposition}[theorem]{Proposition}
\theoremstyle{definition}
\newtheorem{definition}[theorem]{Definition}
\theoremstyle{remark}
\newtheorem{remark}{Remark}
\definecolor{darkblue}{rgb}{0,0,0.8}
\def\bc{\mathbf{c}}
\def\be{\mathbf{e}}
\def\bw{\mathbf{w}}
\def\bg{\mathbf{g}}
\def\bp{\mathbf{p}}
\def\re{\mathbb{R}}
\def\bbG{\mathbb{G}}
\def\mbi{\mathbb{I}}
\def\bit{\begin{itemize}}
\def\eit{\end{itemize}}
\def\Niin{\mathcal{N}_i^{\mathrm{in}}}
\def\Niout{\mathcal{N}_i^{\mathrm{out}}}
\def\Nikin{\mathcal{N}_{ik}^{\mathrm{in}}}
\def\Nikout{\mathcal{N}_{ik}^{\mathrm{out}}}
\def\1{\mathbf{1}}
\def\0{\mathbf{0}}
\def\la{\langle}
\def\ra{\rangle}
\def\a{{\alpha}}
\def\b{\beta}
\def\g{{\gamma}}
\def\T{{\intercal}}
\def\diag{{\rm diag}}
\def\argmin{\mathop {\rm argmin}}
\newtheorem{assumption}{Assumption}
\def\det{\mathop{\hbox{\rm det}}}
\def\diag{\mathop{\hbox{\rm diag}}}
\def\spose#1{\hbox to 0pt{#1\hss}}
\def\text #1{\hbox{\quad#1\quad}}
\def\T{^T\!}
\def\bbG{\mathbb{G}}
\def\nthinsp{\mskip -2   mu}
\def\superstar{^{\raise 0.5pt\hbox{$\nthinsp *$}}}
\def\SUPERSTAR{^{\raise 0.5pt\hbox{$*$}}}
\def\lamstarT {\lambda^{\raise 0.5pt\hbox{$\nthinsp *$}T}}
\def\hbar{\skew{4.2}\bar h}
		\def\bkE{{\rm I\kern-.17em E}}
		\def\bk1{{\rm 1\kern-.17em l}}
		\def\bkD{{\rm I\kern-.17em D}}
		\def\bkR{{\rm I\kern-.17em R}}
		\def\bkP{{\rm I\kern-.17em P}}
		\def\bkY{{\bf \kern-.17em Y}}
		\def\bkZ{{\bf \kern-.17em Z}}
		\def\bc{\begin{center}}
		\def\be{\begin{enumerate}}
		\def\bi{\begin{itemize}}
		\def\ec{\end{center}}
		\def\ee{\end{enumerate}}
		\def\ei{\end{itemize}}
		\def\es{\end{small}}
		\def\eS{\end{slide}}
	\def\cp2problem#1#2#3#4{\fbox
		 {\begin{tabular*}{0.9\textwidth}
			{@{}l@{\extracolsep{\fill}}l@{\extracolsep{6pt}}l@{\extracolsep{\fill}}c@{}}
				#1 & & $#4 $ 
			\end{tabular*}}}
		\def\bkE{{\rm I\kern-.17em E}}
		\def\bk1{{\rm 1\kern-.17em l}}
		\def\bkD{{\rm I\kern-.17em D}}
		\def\bkR{{\rm I\kern-.17em R}}
		\def\bkP{{\rm I\kern-.17em P}}
		\def\bkZ{{\bf{Z}}}
\newcommand {\beeq}[1]{\begin{equation}\label{#1}}
\newcommand {\eeeq}{\end{equation}}
\newcommand {\bea}{\begin{eqnarray}}
\newcommand {\eea}{\end{eqnarray}}
\def\texitem#1{\par\smallskip\noindent\hangindent 25pt
               \hbox to 25pt {\hss #1 ~}\ignorespaces}
\newcommand{\beq}{\begin{equation}}
\newcommand{\eeq}{\end{equation}}
\newcommand{\beqn}{\begin{eqnarray}}
\newcommand{\eeqn}{\end{eqnarray}}
\newcommand{\beqno}{\begin{eqnarray*}}
\newcommand{\eeqno}{\end{eqnarray*}}
\newcommand{\bma}{\begin{displaymath}}
\newcommand{\ema}{\end{displaymath}}
\newcommand{\bnu}{\begin{enumerate}}
\newcommand{\enu}{\end{enumerate}}
\newcommand{\bce}{\begin{center}}
\newcommand{\ece}{\end{center}}
\newcommand{\btb}{\begin{tabular}}
\newcommand{\etb}{\end{tabular}}
\def\G{{\mathbb{G}}}
\def\D{{\mathsf{D}}}
\def\K{{\mathsf{K}}}
\def\ba{{\mathbf{a}}}
\def\bp{{\mathbf{p}}}
\def\bx{{\mathbf{x}}}
\def\by{{\mathbf{y}}}
\def\bz{{\mathbf{z}}}
\def\b1{{\mathbf{1}}}
\def\diag{{\rm diag}}
\newcommand{\bs}{{\mathbf{s}}}
\begin{document}

% \articletype{ARTICLE TEMPLATE}% Specify the article type or omit as appropriate

\title{$AB$/Push-Pull Method for Distributed Optimization in Time-Varying Directed Networks}

\author{
\name{Angelia Nedi{\'c}\textsuperscript{a}\thanks{CONTACT Angelia Nedi{\'c}. Email: Angelia.Nedich@asu.edu. This work has been partially supported by the Office of Naval Research award N00014-21-1-2242} and Duong Thuy Anh Nguyen\textsuperscript{a} and Duong Tung Nguyen\textsuperscript{a}}
\affil{\textsuperscript{a}School of Electrical, Computer and Energy Engineering, Arizona State University, Tempe, AZ, United States}
}

\maketitle

\begin{abstract}
In this paper, we study the distributed optimization problem for a system of agents embedded in time-varying directed communication networks. Each agent has its own cost function and agents cooperate to determine the global decision that minimizes the summation of all individual cost functions. We consider the so-called push-pull gradient-based algorithm (termed as $AB$/Push-Pull) which employs both row- and column-stochastic weights simultaneously to track the optimal decision and the gradient of the global cost while ensuring consensus and optimality. We show that the algorithm converges linearly to the optimal solution over a time-varying directed network for a constant stepsize when the agent's cost function is smooth and strongly convex. The linear convergence of the method has been shown in Saadatniaki et al.\ (2020), where the multi-step consensus contraction parameters for row- and column-stochastic mixing matrices are not directly related to the underlying graph structure, and the explicit range for the stepsize value is not provided. With respect to Saadatniaki et al.\ (2020), the novelty of this work is twofold: (1) we establish the one-step consensus contraction for both row- and column-stochastic mixing matrices with the contraction parameters given explicitly in terms of the graph diameter and other graph properties; and (2) we provide explicit upper bounds for the stepsize value in terms of the properties of the cost functions, the mixing matrices, and the graph connectivity structure.

\end{abstract}

% We study the distributed optimization problem for a system of agents embedded in time-varying directed communication networks. Each agent has its own cost function and agents cooperate to determine the global decision that minimizes the summation of all individual cost functions. We consider the AB/Push-Pull gradient-based algorithm which employs both row- and column-stochastic weights simultaneously to track the optimal decision and the gradient of the global cost while ensuring consensus and optimality. We show that the algorithm converges linearly to the optimal solution over a time-varying directed network for a constant stepsize when the agent's cost function is smooth and strongly convex. The linear convergence of the method has been shown in Saadatniaki et all. (2020), where the multi-step consensus contraction parameters are not directly related to the underlying graph structure and the explicit range for the stepsize value is not provided. With respect to Saadatniaki et all. (2020), the novelty of this work is twofold: (1) we establish the one-step consensus contraction for both row- and column-stochastic mixing matrices with parameters explicitly in terms of graph properties; and (2) we provide explicit upper bounds for the stepsize value in terms of the properties of the cost functions, the mixing matrices, and the graph connectivity structure.

\begin{keywords}
Distributed optimization; gradient tracking; time-varying graphs; directed graphs
\end{keywords}

    %%%%%%%%%%%%%%%%%%%%%%%%%%%%%%%%%%%%
    \section{Introduction}
    %%%%%%%%%%%%%%%%%%%%%%%%%%%%%%%%%%%%
    We consider a system of $n$ agents embedded in a communication network with the goal to collaboratively solve the following minimization problem:
	\begin{equation} \label{eq-problem}
	\min_{x\in \re^p}~ f(x)=\frac{1}{n}\sum\limits_{i=1}^n f_i(x),
	\end{equation}
    where each function $f_i: \re^p\rightarrow \re$ represents the cost function of agent $i$, is strongly convex and known by agent $i$ only. 
    The strong convexity condition implies that problem~\eqref{eq-problem} has a unique optimal solution.
    The agents want to determine the optimal solution by performing local computations and limited information exchange with their local neighbors in the communication network. 
    Decentralized and collaborative approach is particularly appealing in large-scale, multi-agent systems with privacy concerns and limited computation, communication, or storage capabilities. In these scenarios, the data is collected and/or stored in a distributed manner, thus, computing tasks are distributed over all the agents and information exchange occurs only between the agents with direct communication links. Such problems appear in many engineering and scientific applications for example in wireless sensor networks \cite{Rabbat2004}, distributed sensing \cite{Bazerque2010}, trajectory optimization for formation control of vehicles~\cite{Stipanovic2002}, large-scale machine learning \cite{Tsianos2012ML}, and cooperative multi-agent systems \cite{Raffard2004}. 
    
    Distributed optimization of the sum of convex functions has been of considerable interest and many algorithms have been proposed including gradient-based methods \cite{Nedic2009,Nedic2011,Ram2009, Srivastava2011,xin2018linear}, dual averaging methods \cite{Duchi2012},
    % distributed alternating direction method of multipliers (ADMM)
    ADMM \cite{Wei2014}, and 
    %distributed 
    Newton methods \cite{mokhtari2017network,varagnolo2016newton}. Early works have often assumed that the underlying network is undirected (see literature review in~\cite{Huan2021}) and most commonly require doubly stochastic or weight-balanced \cite{Gharesifard2012} mixing matrices. Reference~\cite{Shi2015} uses a gradient difference structure in the algorithm to provide the first-order method that achieves a geometric convergence with the requirement of symmetric weights. Based on the ADMM approach, the work in~\cite{Wei2014} demonstrates a linear convergence while the Nesterov's acceleration method in reference~\cite{olshevsky2017linear} obtains convergence times that scale linearly in the number of agents. Reference~\cite{Shi2015_2} investigates decentralized algorithms that take advantage of proximal operations for the non-smooth terms. In \cite{pu2018distributed,pu2017flocking}, stochastic variants of distributed methods have been considered for asynchronous computations. 

    In many scenarios, agents communications are directed, for example, due to broadcasting at different power levels, thus resulting in communications that correspond to directed graphs. To cope with directed graphs, reference~\cite{Tsianos2012} introduces a subgradient-push algorithm to achieve consensus among the agents on an optimal point. The work in~\cite{nedic2015distributed} further studies the push-sum technique for time-varying directed graphs with a convergence rate of $O(\ln{t}/\sqrt{t})$ for diminishing stepsizes. Aiming to improve the convergence rate, algorithms ADD-OPT~\cite{xi2018add} and Push-DIGing~\cite{nedic2017achieving} incorporates the push-sum protocol with  gradient estimation approach, and show geometric convergence for a sufficiently small step-size. The implementation of these methods require the knowledge of agents' out-degree in order to construct a column-stochastic weight matrix, which is later removed in~\cite{xi2018linear} and in FROST method~\cite{Xin2019FROSTFastRO}. 
    
    The aforementioned push-sum based works use an independent algorithm to asymptotically compute the right or left eigenvectors of the weight matrix, corresponding to the eigenvalue of~$1$. Thus, the resulting algorithms are nonlinear and involve additional computation among agents. Unlike the push-sum protocol, the alternate AB/Push-Pull methods introduced in~\cite{pshi21, xin2018linear} use a row-stochastic matrix and a column-stochastic matrix simultaneously to achieve a linear convergence. Recent work in~\cite{Pu2020} further addresses the challenge of noisy information exchange and shows linear convergence (in expectation) to a neighborhood of the optimum exponentially fast, under a constant stepsize. The analysis of $AB$/Push-Pull (with stochastic gradients) was shown in~\cite{XinSahuKhanKar2019}. A variant of the method, where the stepsize $\a$ is agent dependent, has been analyzed in~\cite{pshi21} for the case of a static graph. All the aforementioned work on the AB/Push-Pull methods is for a static directed graph.
    The AB/Push-Pull method for time-varying directed graphs has been studied in~\cite{Saadatniaki2020}, where a linear convergence is shown for the case when the global objective function is smooth and strongly-convex, and the underlying time-varying graphs have bounded connectivity. In order to facilitate privacy design, the recent work in~\cite{Wang2022} proposes to tailor gradient methods for differentially-private distributed optimization. The work in~\cite{Huan2022} provides a general gradient-tracking based privacy-preserving algorithm with added randomness in optimization parameters and shows that the added randomness has no influence on the accuracy of optimization.

    In this paper, 
    we consider $AB$/Push-Pull algorithm where the agent communications are given by a sequence of time-varying directed graphs. At every time $k$, the agents' updates are described by two non-negative matrices that are compliant with the connectivity structure of the graph: a row-stochastic matrix for the mixing of the decision variables {\it (pull-step)} and a column-stochastic matrix for tracking the average gradients {\it (push-step)}. 
    We prove that the method converges to the optimal solution geometrically fast, provided that the stepsize is small enough and the agents' objective functions are smooth enough. Moreover, we provide an explicit upper bound for the stepsize range and characterize the convergence rate in terms of the problem parameters, algorithms' parameters (weight matrices), and the underlying graphs' connectivity structures. 
    
    A key difficulty in the analysis is imposed by the time-varying nature of the mixing matrices. 
    Our analysis makes use of time-varying weighted averages and time-varying weighted norms, where the weights are defined in terms of stochastic vector sequences associated with the mixing matrix sequences. This allows us to establish consensus contractions per each update step for both row- and column-stochastic mixing matrices.
    This is unlike the work in~\cite{Saadatniaki2020} that considers the $AB$/Push-Pull method over time-varying graphs, where the analysis makes use of the Euclidean norms -- at the expense of relying on a multi-step consensus contraction, even when every underlying graph is strongly connected. 
    Moreover, through the use of time-varying weighted norms and the relations of the weight matrices with the underlying graphs, we provide explicit upper bounds for the stepsize range in terms of properties of the mixing matrices and the graphs' connectivity structure. This is in sharp contrast with~\cite{Saadatniaki2020} where no explicit range is provided. Also, our analysis in this paper is in contrast with~\cite{XinSahuKhanKar2019,pshi21} where the stepsize range is given in terms of the singular values of the weight matrices, which are neither explicitly capturing the structure of the matrices nor the underlying graph connectivity structure. 
		
    The structure of this paper is as follows. We first provide notation, introduce our algorithm and state basic assumptions in Section~\ref{Sec:note-algo-asum}. We present some basic results in Section~\ref{sec-basic}. We establish the convergence properties of the algorithm in Section~\ref{sec-analysis} and Section~\ref{sec-conver}, and we conclude with some remarks in Section~\ref{sec: conclusion}.

	%%%%%%%%%%%%%%%%%%%%%%%%%%%%%%%%%%%%
	\section{Notation and Terminology} 
	\label{Sec:note-algo-asum}
	%%%%%%%%%%%%%%%%%%%%%%%%%%%%%%%%%%%%
	Throughout the paper, all vectors are viewed as column-vectors unless stated otherwise.
	We use $\la \cdot, \cdot\ra$ to denote the inner product, and 
	$\|\cdot\|$ to denote the standard Euclidean norm.
	We write $\mathbf{1}$ to denote the vector with all entries equal to 1, and 
% 	$\0$ for the vector with all entries equal to 0. 
	$\mathbb{I}$ to denote the identity matrix.
% 	The dimensions of the vector $\1$ and the matrix $\bbI$ 
% and $\0$
% 	are to be understood from the context.
	The $i$-th entry of a vector $u$ is denoted by $u_i$, while it is denoted by $[u_k]_i$ 
	for a time-varying vector $u_k$. 
	Given a vector $v$, we use $\min(v) $ and $\max(v)$
	to denote the smallest and the largest entry of $v$, respectively,
	i.e., $\min(v)=\min_i v_i$ and $\max(v)=\max_i v_i$.
% 	We write $v>\0$ to indicate that the vector $v$ has positive entries.
	A vector is said to be a stochastic vector if its entries are nonnegative and sum to 1. 	
% 	For a set $S$ with finitely many elements, we use $|S|$ to denote its cardinality.
	
	To denote the $ij$-th entry of a matrix $A$, we write $A_{ij}$, and we write $[A_k]_{ij}$ when the matrix is time-dependent.
	For any two matrices $A$ and $B$ of the same dimension,
	we write $A\le B$ to denote that $A_{ij}\le B_{ij}$ for all $i$ and $j$.
% 	; in other words, the inequality $A\le B$ is to be interpreted component-wise.
	A matrix is said to be nonnegative if all its entries are nonnegative.
	For a nonnegative matrix, we use $\min(A^+)$ to denote the smallest positive entry of $A$, i.e.,	
	$\min(A^+)=\min_{\{ij:A_{ij}>0\}} A_{ij}$.
	A nonnegative matrix is said to be row-stochastic if each row entries sum to 1, and 
	it is said to be column-stochastic if each column entries sum to 1. In particular, if 
	$A\in\mathbb{R}^{n\times n}$ is row-stochastic and 
	$B\in\mathbb{R}^{n\times n}$ is column stochastic, 
	then $A\mathbf{1}=\mathbf{1}$ and $\mathbf{1}^{\T} B=\mathbf{1}^{\T}$.
	
	Given a vector $\ba\in\re^n$ with positive entries $a_1,\ldots,a_n$, the $\ba$-weighted norm can be induced
    in the vector space $\re^p\times\cdots\times\re^p$ (consisting of $n$ copies of $\re^p$),
    as follows:
    \[\|\bx\|_\ba=\sqrt{\sum_{i=1}^n a_i\|x_i\|^2} \qquad\hbox{for $\bx=(x_1,\ldots,x_n)\in\re^p\times\cdots\times\re^p$}.\]
    When $\ba=\1$, we simply write $\|\bx\|$. We also write $\|\bx\|_{\ba^{-1}}$
    to denote the norm induced by the vector with entries $1/a_i$, i.e.,
    $\|\bx\|_{\ba^{-1}}=\sqrt{\sum_{i=1}^n\frac{\|x_i\|^2}{a_i}}$. The following relations, which can be proved by using H\"older's inequality, will be useful in our analysis: 
    \begin{subequations}
    \begin{align}\label{eq-norm-anorm}
    &\|\bx\|\le \sqrt{\tfrac{1}{\min(\ba)}}\, \|\bx\|_{\ba} \qquad\hbox{for all $\bx\in\re^p\times\cdots\times\re^p$
    and $\ba>\0$},\\
    \label{eq-stochvec-norm}
    &\|\bx\|\le \|\bx\|_{\ba^{-1}} \qquad\hbox{for all $\bx\in\re^p\times\cdots\times\re^p$
    and $\ba>0$ satisfying $\la \ba,\1\ra=1$}.
    \end{align}
    \end{subequations}
	
	We let $[n]=\{1,\ldots,n\}$ for an integer $n\ge1$.
	A directed graph $\mathbb{G}=([n],\mathcal{E})$ is specified by the edge set $\mathcal{E}\subseteq [n]\times [n]$ of ordered pairs of nodes.
	Given a directed graph $\mathbb{G}=([n],\mathcal{E})$,
	the sets $\Niout$ and $\Niin$ denote the out-neighbors and the in-neighbors of a node $i$, i.e., $\Niout=\{j\mid (i,j)\in\mathcal{E}\}$ and
	$\Niin=\{j\mid (j,i)\in\mathcal{E}\}$.

    We say that a directed graph is {\it strongly connected} if there is a directed path from any node to all other nodes in the graph. Given a directed path, the length of the path is the number of edges in the path. We use the following definitions:
	\begin{definition} [Graph Diameter] \label{def-diam}
	The diameter of a strongly connected directed graph $\mathbb{G}$, denoted by $\D(\G)$,
	is the length of the longest path in the collection of all shortest
	directed paths connecting all ordered 
	pairs of distinct nodes in $\bbG$.
	\end{definition}
	Let $\bp_{jl}$ denote a \textit{shortest directed path from node $j$ to node $l$}, where $j\ne l$. A collection $\mathcal{P}$ of directed paths in $\G$ is a shortest-path graph covering if $\bp_{jl}\in \mathcal{P}$ and $\bp_{lj}\in \mathcal{P}$ for any two
    nodes $j,~l\in [n]$, $j\ne l$. The \textit{utility of the edge} $(j,l)$ with respect to the covering $\mathcal{P}$ is the number of shortest paths in $\mathcal{P}$ that pass through the edge $(j,l)$. 
    Define $\K(\mathcal{P})$ as the maximum edge-utility in $\mathcal{P}$ taken over all edges in the graph, i.e.,
    $\displaystyle \K(\mathcal{P})=\max_{(j,l)\in\mathcal{E}}\sum_{\bp\in\mathcal{P}}\chi_{\{(j,l)\in\bp\}},$
    where $\chi_{\{(j,l)\in\bp\}}$ is the indicator function taking value 1 when $(j,l)\in\bp$ and, otherwise, taking value 0. Denote by $\mathcal{S}(\G)$  the collection of all possible shortest-path coverings of the graph $\G$, we have the following definition.

    \begin{definition} [Maximal Edge-Utility] \label{def-edgeut}
    For a strongly connected directed graph $\G=([n],\mathcal{E})$, the maximal edge-utility is the maximum value of $\K(\mathcal{P})$ taken over all possible shortest-path coverings $\mathcal{P}\in\mathcal{S}(\G)$, i.e.,
    $\K(\G)=\max_{\mathcal{P}\in\mathcal{S}(\G)}\K(\mathcal{P}).$
    \end{definition}

	%%%%%%%%%%%%%%%%%%%%%%%%%%%%%%%%%%%%%
	\subsection{$AB$/Push-Pull Method and Assumptions}\label{ssec-algo-asum}
	%%%%%%%%%%%%%%%%%%%%%%%%%%%%%%%%%%%%%
	We consider a system with $n$ agents, and let each agent $i\in\{1,2,\ldots,n\}$ have a local copy $x_i\in \re^p$ of the decision variable and a direction $y_i\in\mathbb{R}^p$ which is an estimate of a ``global update direction".  These variables are maintained and updated over time and at iteration $k$, they are denoted by $x_i^k$ and $y_i^k$, respectively. 
    We present a distributed  algorithm, termed $AB$/Push-Pull algorithm 
	to fairly capture independent and simultaneous
	developments of two closely related methods, namely the {\it Push-Pull} method 
	of~\cite{pshi21} and the method proposed in~\cite{xin2018linear}.
	We consider the $AB$/Push-Pull gradient method over a sequence $\{\bbG_k\}$ of
	directed graphs, where the agents communicate over a graph $\bbG_k$ at the round $k$ of updates.
	At every time $k$, the agents updates are described by two non-negative matrices $A_k$ and $B_k$ that are compliant with the connectivity structure of the graph $\bbG_k$, i.e.,
	\begin{subequations}
	\begin{align}\label{eq-agcompat}
	&[A_k]_{ij}>0\quad\hbox{for all $j\in\Nikin\cup\{i\}$},
	\qquad  &&[A_k]_{ij}=0\quad\hbox{for all $j\not\in\Nikin\cup\{i\}$},\\\label{eq-bgcompat}
	&[B_k]_{ji}>0\quad\hbox{for all $j\in\Nikout\cup\{i\}$},
	\qquad  &&[B_k]_{ji}=0\quad\hbox{for all $j\not\in\Nikout\cup\{i\}$}.
	\end{align}
	\end{subequations}
	Moreover, each matrix $A_k$ is row-stochastic and each matrix $B_k$ is column-stochastic, i.e., $A_k\1=\1$ and ${\1}^{\T} B_k={\1}^{\T}$ for all $k\ge0.$
	The method works as follows:
	at time $k$, every agent $i$ sends its vector $x_i^k$ and a scaled direction
	$[B_k]_{ji}y_i^k$ to its out-neighbors $j\in\Nikout$, while it keeps $[B_k]_{ii}y_i^k$ for its own update. 

	Upon the information exchange step, every agent $i$ updates as follows: for all $k\ge0$,
	\begin{subequations}\label{eq-method}
		\begin{align}
		&x_i^{k+1} = \sum_{j\in\Nikin}[A_k]_{ij}x_j^k  - \a y_i^k,
		\label{eq-xupdate}\\
		&y_i^{k+1} =  \sum_{j\in\Nikin}[B_k]_{ij}y_j^k + \nabla f_i(x_i^{k+1}) -\nabla f_i(x_i^k),
		\label{eq-yupdate}
		\end{align}
	\end{subequations}
	where $\a>0$ is a stepsize. 
	In this method, the agent $i$ decides on the entries of $A_k$ in the $i$th row (for the in-neighbors $j\in\Nikin$), while the value $[B_k]_{ij}$ is selected by agent $j\in \Nikin$. Each agent $i$ initializes the updates with 
	an arbitrary vector $x_i^0$ and with $y_i^0=\nabla f_i(x_i^0)$, which does not require any coordination among agents.
	The update step using the mixing matrix $A_k$ is viewed as a {\it pull-step}, while the step utilizing the matrix $B_k$ is viewed as a {\it push-step} as it is reminiscent of the push-sum consensus method. 
	
	When the matrices $A_k$ and $B_k$ are compatible with the underlying graph $\bbG_k$ (see~\eqref{eq-agcompat} and~\eqref{eq-bgcompat}), we can re-write the method~\eqref{eq-method} as follows: 
	for all $i\in[n]$ and all $k\ge0$,
	\begin{subequations}\label{eq-met}
		\begin{align}
		&x_i^{k+1} =  \sum_{j=1}^n[A_k]_{ij}x_j^k  - \a y_i^k,
		\label{eq-x}\\
		&y_i^{k+1} =  \sum_{j=1}^n [B_k]_{ij}y_j^k + \nabla f_i(x_i^{k+1}) -\nabla f_i(x_i^k),
		\label{eq-y}\\
		&\hbox{where $x_i^0\in\mathbb{R}^p$ is arbitrary and $y_i^0=\nabla f_i(x_i^0)$}.
		\label{eq-init}
		\end{align}
	\end{subequations}
	
	\noindent We analyze the convergence properties of the method under the following assumptions:

	\begin{assumption}[Strongly Connected Graphs] \label{asm-graphs}
		For each $k$, the directed graph $\bbG_k=([n],\mathcal{E}_k)$ 
		is  strongly connected.
	\end{assumption}
	
	\begin{assumption} [Graph Compatible $A_k$] \label{asm-amatrices}
		For each $k$, the matrix $A_k$ is row-stochastic 
		and compatible with the graph $\bbG_k$
		in the sense of relation~\eqref{eq-agcompat}. Moreover,
		there exists a scalar $a>0$ such that 
		$\min(A_k^+)\ge a$ for all $k\ge0$.
	\end{assumption}

	\begin{assumption}[Graph Compatible $B_k$] \label{asm-bmatrices}
		For each $k$, the matrix $B_k$ is column-stochastic 
		and compatible with the graph $\bbG_k$
		in the sense of relation~\eqref{eq-bgcompat}. Moreover,
		there exists a scalar $b>0$ such that 
		$\min(B_k^+)\ge b$ for all $k\ge0$.
	\end{assumption}
	
% 	Next, we formalize the assumptions that we use for the functions $f_i$ in problem~\eqref{eq-problem}.
	\begin{assumption}
	[Lipschitz gradient] \label{asm-functions}
	Each $f_i$ is continuously differentiable and has $L$-Lipschitz continuous gradients, i.e., for some $L>0$,
	\begin{equation*}
	\|\nabla f_i(x)-\nabla f_i(u)\|\le L \|x-u\|,
	\qquad\hbox{for all $x,u\in \re^p$}.
	\end{equation*}
	\end{assumption}
	
	\begin{assumption}
	[Strong convexity]\label{asm-strconv}
	The average-sum function $f=\frac{1}{n}\sum_{i=1}^n f_i$ is $\mu$-strongly convex, i.e., for some $\mu>0$, 
	\[\la \nabla f(x)-\nabla f(u),x-u\ra \ge \mu\|x-u\|^2\qquad\hbox{for all $x,u\in \re^p$}.\]
	\end{assumption}

	%%%%%%%%%%%%%%%%%%%%%%%%%%%%%%%%%%%%
	\section{Basic Results}\label{sec-basic}
	%%%%%%%%%%%%%%%%%%%%%%%%%%%%%%%%%%%%
% 	In this section we provide some basic results related to linear combinations of vectors, graphs, stochastic matrices, and the gradient method.
	%------------------------------------------------------------------------------------------------
	\subsection{Linear Combinations and Graphs}\label{ssec-lincomb-graphs}
	%------------------------------------------------------------------------------------------------
	
	Certain contractive properties of the iterates produced by the method 
	are inherited from the use of the mixing terms $\sum_{j=1}^n[A_k]_{ij}x_j^k$
	and $\sum_{j=1}^n[B_k]_{ij}y_j^k$, and the fact that the matrices $A_k$ and $B_k$ are compliant with a directed strongly connected graph $\bbG_k$. The following results will help us capture these contractive properties.
	
	For a collection $\{u_i, \, i\in[n]\}\subset\re^p$ of vectors and a collection $\{\g_i,\, i\in[n]\}\subset\re$ of scalars, we have the following relations (see Lemma~1 and Corollary~1 of~\cite{nguyen2022distributed}):
	\begin{equation}\label{eq-normofsum}
	\left\|\sum_{i=1}^n \g_i u_i\right\|^2
	=\left(\sum_{j=1}^n \g_j\right)\sum_{i=1}^n  \g_i \|u_i\|^2
	-\frac{1}{2}\sum_{i=1}^n \sum_{j=1}^n \g_i \g_j \|u_i-u_j\|^2.
	\end{equation}
	Moreover, if $\sum_{i=1}^n \g_i=1$ holds, then we have
	\begin{subequations}
		\begin{align}
		\frac{1}{2}\sum_{i=1}^n \sum_{j=1}^n \g_i \g_j \|u_i-u_j\|^2 =\sum_{i=1}^n \g_i \left\|u_i - \left(\sum_{\ell=1}^n \g_\ell u_\ell\right)\right \|^2&, \label{eq-aver-disp}\\
		\left\|\sum_{i=1}^n \g_i u_i - u \right\|^2 = \sum_{i=1}^n \g_i \|u_i-u\|^2 -\sum_{i=1}^n \g_i \left\|u_i - \left(\sum_{\ell=1}^n \g_\ell u_\ell\right)\right \|^2&, ~~\hbox{for all }u\in \re^p. \label{eq-gen-aver}
		\end{align}
	\end{subequations}

	We also make use of the following result.
	\begin{lemma}[\cite{nguyen2022distributed}, Lemma 2]\label{lem-xdiverse}
	Let $\bbG=([n],{\cal E})$ be a strongly connected directed graph, where a vector $x_i$ is associated with node $i$ for all $i\in[n]$. We then have
	\[\sum_{(j,\ell)\in\mathcal{E}} \|x_j - x_\ell \|^2
	\ge \frac{1}{\mathsf{D}(\bbG)\mathsf{K}(\bbG)}\sum_{j=1}^n\sum_{\ell=j+1}^n\|x_j - x_\ell\|^2,\]
	where $\mathsf{D}(\bbG)$ is the diameter of the graph $\bbG$
	and $\mathsf{K}(\bbG)$ is the maximal edge-utility in the graph 
	(see Definitions~\ref{def-diam} and~\ref{def-edgeut}). 
	\end{lemma}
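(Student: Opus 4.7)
The plan is to exploit shortest paths to transfer information from distant pairs $(j,\ell)$ to the edges actually appearing in $\mathcal{E}$. Since the graph is strongly connected, for every ordered pair $j\neq\ell$ we may fix a shortest directed path $\bp_{j\ell}$ from $j$ to $\ell$, and the collection $\mathcal{P}=\{\bp_{j\ell}: j\neq \ell\}$ is a shortest-path graph covering as in Definition~\ref{def-edgeut}. The length of $\bp_{j\ell}$ is at most $\D(\bbG)$ by Definition~\ref{def-diam}, and the number of paths from $\mathcal{P}$ passing through any fixed edge $(a,b)\in\mathcal{E}$ is at most $\K(\mathcal{P})\le \K(\bbG)$. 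These are the only two combinatorial facts we need.

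First, for a fixed ordered pair $j\neq \ell$ with shortest path $\bp_{j\ell}=(j=v_0,v_1,\ldots,v_m=\ell)$, I would apply the Cauchy--Schwarz inequality to the telescoping sum $x_j-x_\ell=\sum_{i=0}^{m-1}(x_{v_i}-x_{v_{i+1}})$ to obtain
\[
\|x_j-x_\ell\|^2 \;\le\; m\sum_{i=0}^{m-1}\|x_{v_i}-x_{v_{i+1}}\|^2 \;\le\; \D(\bbG)\sum_{(a,b)\in\bp_{j\ell}}\|x_a-x_b\|^2.
\]
Next, I would sum this bound over all ordered pairs $j\neq\ell$ and swap the order of summation on the right-hand side, grouping contributions by the edge $(a,b)$ being traversed:
\[
\sum_{j\neq\ell}\|x_j-x_\ell\|^2 \;\le\; \D(\bbG)\sum_{(a,b)\in\mathcal{E}}\|x_a-x_b\|^2 \cdot \#\{(j,\ell)\colon j\neq\ell,\ (a,b)\in \bp_{j\ell}\}.
\]
The counting factor is exactly the utility of $(a,b)$ with respect to $\mathcal{P}$ and is bounded by $\K(\bbG)$. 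Using $\sum_{j\neq\ell}\|x_j-x_\ell\|^2 = 2\sum_{j<\ell}\|x_j-x_\ell\|^2$ and rearranging yields
\[
\sum_{(a,b)\in\mathcal{E}}\|x_a-x_b\|^2 \;\ge\; \frac{2}{\D(\bbG)\K(\bbG)}\sum_{j=1}^{n}\sum_{\ell=j+1}^{n}\|x_j-x_\ell\|^2,
\]
which is even slightly stronger than the stated inequality.

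The step I expect to be the main obstacle is purely notational: keeping ordered versus unordered pairs straight while swapping the double sum, and making sure that the path $\bp_{j\ell}$ is treated as an ordered sequence of directed edges (so that the counting argument references $\K(\mathcal{P})$ defined over directed edges of $\mathcal{E}$). The analytic content is just one telescoping Cauchy--Schwarz; all the subtlety lives in tying the edge-utility bookkeeping to Definition~\ref{def-edgeut}.
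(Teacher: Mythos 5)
Your proof is correct and is essentially the argument that Definitions~\ref{def-diam} and~\ref{def-edgeut} are built for (the paper itself only cites \cite{nguyen2022distributed} for this lemma rather than reproving it): telescope along a fixed shortest path, apply Cauchy--Schwarz with the path length bounded by $\mathsf{D}(\bbG)$, swap the double sum, and bound the per-edge count by the edge utility, which is at most $\mathsf{K}(\mathcal{P})\le\mathsf{K}(\bbG)$ since $\mathsf{K}(\bbG)$ is defined as a maximum over coverings. Your factor-of-$2$ strengthening is also genuine, since the covering contains both $\bp_{j\ell}$ and $\bp_{\ell j}$ and the left-hand side of the ordered-pair sum double-counts each unordered pair.
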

	
% 	For a strongly connected directed graph $\bbG$, the diameter of the graph is larger or equal to the unilateral-diameter of the graph, i.e.,
% 	\[\mathsf{D}(\bbG)\ge \mathsf{L}(\bbG).\]
% 	Hence, Lemma~\ref{lem-xdiverse} provides a better bound for strongly connected 
% 	directed graph.
	
	%------------------------------------------------------------------------------------------------
	\subsection{Implications of Stochastic Nature of Matrices $A_k$ and $B_k$}\label{sec-stochmat}
	%------------------------------------------------------------------------------------------------
% 	Here, we focus on the matrices $A_k$ and $B_k$ and provide some basic relations that are implied by their stochastic nature. 
	The column stochastic property of the matrices $B_k$ ensures that the sum of the $y$-iterates of the method~\eqref{eq-met}, at any time $k$,  is equal to the sum of the gradients $\nabla f_i(x_i^k)$, as seen in the following lemma. 
	%(see proof in Appendix~\ref{app-sumgrad}).
	\begin{lemma}
	\label{lem-sumgrad}
	Consider the method in~\eqref{eq-met}, and assume that each $B_k$ is column-stochastic.
	Then, we have
	$\sum_{i=1}^n y_i^k=\sum_{i=1}^n \nabla f_i(x_i^k)$ for all $k\ge0$.
	\end{lemma}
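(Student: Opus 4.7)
The plan is to prove the identity by induction on the iteration index $k$, using the column-stochasticity of $B_k$ as the key algebraic fact.

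For the base case $k=0$, the claim is immediate from the initialization~\eqref{eq-init}, which sets $y_i^0 = \nabla f_i(x_i^0)$ for every $i$; summing over $i$ yields $\sum_{i=1}^n y_i^0 = \sum_{i=1}^n \nabla f_i(x_i^0)$.

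For the inductive step, I would assume the identity holds at iteration $k$ and then sum the $y$-update~\eqref{eq-y} over all $i \in [n]$. The mixing-term contribution $\sum_{i=1}^n \sum_{j=1}^n [B_k]_{ij} y_j^k$ is handled by swapping the order of summation so that the inner sum becomes $\sum_{i=1}^n [B_k]_{ij}$, which equals $1$ for every $j$ because $B_k$ is column-stochastic (equivalently, $\mathbf{1}^{\T} B_k = \mathbf{1}^{\T}$). Therefore that term collapses to $\sum_{j=1}^n y_j^k$, and by the inductive hypothesis this equals $\sum_{i=1}^n \nabla f_i(x_i^k)$. The remaining two terms from~\eqref{eq-y} are $\sum_{i=1}^n \nabla f_i(x_i^{k+1}) - \sum_{i=1}^n \nabla f_i(x_i^k)$, and the $\nabla f_i(x_i^k)$ sums cancel, leaving exactly $\sum_{i=1}^n \nabla f_i(x_i^{k+1})$, as desired.

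There is no real obstacle here; the only subtlety is being careful with the order of summation and correctly identifying that column-stochasticity (not row-stochasticity) is what makes the mixing term telescope, which is precisely why the $y$-variables are updated with the column-stochastic matrix $B_k$ rather than the row-stochastic matrix $A_k$. The induction is otherwise a one-line calculation at each step.
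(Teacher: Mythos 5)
Your proof is correct and follows exactly the route the paper intends: induction on $k$, with the base case given by the initialization $y_i^0=\nabla f_i(x_i^0)$ and the inductive step collapsing the mixing term via $\mathbf{1}^{\T}B_k=\mathbf{1}^{\T}$ so that the gradient differences telescope. The paper states only that the proof is by induction on $k$ and omits the details, which are precisely the ones you supply.
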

	
	\begin{proof}
	The proof is by the mathematical induction on $k$. %For $k=0$, the statement is true due to the initialization of the method with $y_i^0=\nabla f_i(x_i^0)$ (see~\eqref{eq-init}). Assume now that for all  $s=0,1,\ldots,k$, the statement is true, i.e., $\sum_{i=1}^n y_i^s=\sum_{i=1}^n \nabla f_i(x_i^s)$ for all $s=0,1,\ldots,k.$
	%Consider now the time $k+1$, from the definition of $y_i^{k+1}$ in~\eqref{eq-y}, it follows that
	%\[\sum_{i=1}^n y_i^{k+1}=\sum_{i=1}^n %\sum_{j=1}^n [B_k]_{ij} y_j^k 
	%+\sum_{i=1}^n \nabla f_i(x_i^{k+1})- %\sum_{i=1}^n \nabla f_i(x_i^k).\]
	%For the double-sum term in the preceding relation, we have that 
	%\[\sum_{i=1}^n \sum_{j=1}^n [B_k]_{ij} y_j^k=\sum_{j=1}^n \left(\sum_{i=1}^n [B_k]_{ij} \right)y_j^k
	%=\sum_{j=1}^n y_j^k=\sum_{i=1}^n \nabla f_i(x_i^k),\]
	%where the second equality follows by the column-stochasticity of $B_k$, while the last equality follows 
	%from the induction hypothesis.
	%The preceding two relations yield
	%\[\sum_{i=1}^n y_i^{k+1}=\sum_{i=1}^n \nabla f_i(x_i^k)
	%+\sum_{i=1}^n \nabla f_i(x_i^{k+1}) - \sum_{i=1}^n \nabla f_i(x_i^k)=\sum_{i=1}^n \nabla f_i(x_i^{k+1}),\]
	%thus showing that the desired relation holds also at time $k+1$.
	\end{proof}
	
% 	The forthcoming lemma states some convergence properties of the transition matrices $A_kA_{k-1}\cdots A_t$.
% 	, which are valid under the assumptions of strong connectivity of the graphs $\bbG_k$ and the graph compatibility of the matrices $A_k$.
	
	\begin{lemma}[\cite{nguyen2022distributed}, Lemma 3]
	\label{lem-amatrices}
	Let Assumption~\ref{asm-graphs} hold, and let $\{A_k\}$ be a matrix sequence
	satisfying Assumption~\ref{asm-amatrices}.
	Then, there exists a sequence $\{\phi_k\}$ of stochastic vectors such that
	\begin{align}\label{eq-phik}
	    \phi_{k+1}^{\T}A_s=\phi_k^{\T}\qquad\hbox{for all $k\ge0$},
	\end{align}
	where the entries of each $\phi_k$ are positive and have a uniform lower bound, i.e.,	
	\[ [\phi_k]_i\ge \frac{a^n}{n}\qquad\hbox{ for all $i\in[n]$},\]
	with $a\in(0,1)$ being the lower bound on the positive entries of the matrices $A_k$.
	\end{lemma}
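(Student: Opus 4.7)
The plan is to construct $\{\phi_k\}$ as limits of backward adjoint products and to derive both the recursion $\phi_{k+1}^{\T}A_k=\phi_k^{\T}$ and the entrywise lower bound from a single block-positivity property of products of the $A_s$.

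Since $A_k\1=\1$, each transpose $A_k^{\T}$ is column-stochastic and therefore maps stochastic vectors to stochastic vectors. For any stochastic vector $\psi\in\re^n$ and any $K>k$, I set
\[\phi_k^{(K,\psi)} \,:=\, A_k^{\T}A_{k+1}^{\T}\cdots A_{K-1}^{\T}\,\psi,\]
which is stochastic and satisfies $\phi_k^{(K,\psi)} = A_k^{\T}\phi_{k+1}^{(K,\psi)}$ by construction. The plan is to show that, as $K\to\infty$, $\phi_k^{(K,\psi)}$ converges to a stochastic limit $\phi_k$ that is independent of the terminal vector $\psi$; passing to the limit in the identity above then yields the recursion $\phi_k = A_k^{\T}\phi_{k+1}$.

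The crux of the argument is the following block-positivity claim: uniformly in $k$, every entry of the $(n{-}1)$-fold product $P_k := A_{k+n-2}A_{k+n-3}\cdots A_k$ is at least $a^{n-1}$. This relies on (i) strong connectivity of each $\bbG_s$ (Assumption~\ref{asm-graphs}) and (ii) positive diagonals of each $A_s$, inherited from the inclusion $j=i$ in the compatibility condition~\eqref{eq-agcompat}. Concretely, for any starting node $j$, let $R_t$ denote the set of nodes reachable from $j$ in at most $t$ steps, where at step $s\le t$ one uses either a self-loop or an out-edge of $\bbG_{k+s-1}$. Whenever $R_{t-1}\subsetneq[n]$, strong connectivity of $\bbG_{k+t-1}$ rules out the complement of $R_{t-1}$ being closed under out-edges, forcing some edge to leave $R_{t-1}$; hence $|R_t|>|R_{t-1}|$ and $R_{n-1}=[n]$. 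Every $i\in[n]$ is thus reachable from $j$ in $n{-}1$ steps (padding with self-loops when the actual reaching happens earlier), and the corresponding path contributes $n{-}1$ factors to the entry of $P_k$, each bounded below by $a$, so $[P_k]_{i,j}\ge a^{n-1}$. A standard Dobrushin/ergodicity-coefficient bound then gives geometric contraction $\|A_k^{\T}\cdots A_{K-1}^{\T}(u-v)\|_1\to 0$ for any two stochastic $u,v$, establishing the existence of the limit.

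The entrywise lower bound on $\phi_k$ follows directly: from $\phi_k = P_k^{\T}\phi_{k+n-1}$ and stochasticity of $\phi_{k+n-1}$,
\[ [\phi_k]_i \,=\, \sum_{j=1}^n [P_k]_{j,i}\,[\phi_{k+n-1}]_j \;\ge\; a^{n-1}\sum_{j=1}^n [\phi_{k+n-1}]_j \;=\; a^{n-1} \;\ge\; \frac{a^n}{n},\]
since $a\in(0,1)$. The main obstacle is the block-positivity routing itself: at each time step we can only use an edge of the current graph $\bbG_{k+t-1}$, and a shortest path within a single $\bbG_s$ need not translate edge-by-edge into available edges of subsequent graphs. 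The positive diagonals — allowing a vertex to ``wait'' at a node until a suitable out-edge appears in a later graph — are therefore essential, and pinning down the explicit constant $a^{n-1}$ requires careful bookkeeping of the reachable-set growth $|R_t|\ge |R_{t-1}|+1$ for $R_{t-1}\subsetneq[n]$.
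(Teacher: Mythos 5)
Your proof is correct. Note that the paper itself gives no proof of this lemma---it is imported verbatim from \cite{nguyen2022distributed} (Lemma~3)---so there is nothing internal to compare against; your argument is the standard ``absolute probability sequence'' construction (backward adjoint products, a uniform positivity bound for $(n{-}1)$-fold products, and a Dobrushin-type contraction to get a $\psi$-independent limit), which is almost certainly the route taken in the cited reference. The two pillars both check out: the reachable-set growth $|R_t|\ge|R_{t-1}|+1$ is forced because, if the complement of $R_{t-1}$ received no edge from $R_{t-1}$ in $\bbG_{k+t-1}$, that graph could not be strongly connected, and the positive diagonals guaranteed by~\eqref{eq-agcompat} let you pad with self-loops so that every entry of $P_k=A_{k+n-2}\cdots A_k$ picks up exactly $n-1$ factors each at least $a$; the identity $\phi_k=P_k^{\T}\phi_{k+n-1}$ with $\phi_{k+n-1}$ stochastic then gives $[\phi_k]_i\ge a^{n-1}$, which is strictly stronger than the stated bound $a^n/n$ since $a\in(0,1)$. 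One cosmetic remark: the recursion in the statement is written as $\phi_{k+1}^{\T}A_s=\phi_k^{\T}$, where $A_s$ is evidently a typo for $A_k$; your proof correctly targets $\phi_{k+1}^{\T}A_k=\phi_k^{\T}$, which is the form actually used later (e.g., in Proposition~\ref{prop-waverx}).
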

	
% 	The stochastic vectors $\phi_s$ from Lemma~\ref{lem-amatrices} will be used to define an appropriate Lyapunov function associated with the method. We note that the sequence $\{\phi_s\}$ is an absolute probability sequence~\cite{Seneta-book}, which has also been used~\cite{touri2014} to study the ergodicity properties of a more involved random matrix sequences.
	
	For the matrices $B_k$,
	we define the stochastic vector sequence 
	$\{\pi_k\}$ as follows:
	\begin{equation}\label{eq-pik}
	\pi_{k+1}=B_k\pi_k,\qquad\hbox{initialized with }\ \pi_0=\frac{1}{n}\1.
	\end{equation}
% 	Under Assumption~\ref{asm-bmatrices} and the strong connectivity of the graphs,
	We examine the sequence $\{\pi_k\}$ in the following lemma.
	\begin{lemma}
	\label{lem-bmatrices}
	Let Assumption~\ref{asm-graphs} hold and let the matrix sequence $\{B_k\}$
	satisfy Assumption~\ref{asm-bmatrices}.
	Then, the vectors $\pi_k$ generated by~\eqref{eq-pik} are stochastic vectors such that
	\[ [\pi_k]_i\ge \frac{b^n}{n}\qquad\hbox{ for all $i\in[n]$ and $k\ge0$},\]
	where $b\in(0,1)$ is the lower bound on the positive entries of the matrices 
	$B_k$.
	\end{lemma}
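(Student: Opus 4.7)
My plan is to prove stochasticity by direct induction and then the lower bound $[\pi_k]_i \ge b^n/n$ by combining two complementary mechanisms. For stochasticity, the base case $\pi_0 = \frac{1}{n}\1$ holds by construction, and inductively $\pi_{k+1} = B_k\pi_k$ is nonnegative (product of nonnegative quantities) with $\1^\T \pi_{k+1} = \1^\T B_k \pi_k = \1^\T \pi_k = 1$ by the column-stochasticity of $B_k$ (Assumption~\ref{asm-bmatrices}).

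For the lower bound I will exploit two features of the $B_k$: (i) all diagonals $[B_k]_{ii}\ge b$, which follows from Assumption~\ref{asm-bmatrices} together with \eqref{eq-bgcompat} applied at $j=i$; and (ii) strong connectivity of each $\bbG_k$ (Assumption~\ref{asm-graphs}). I will split into two regimes. Case 1 ($k\le n-1$): using only (i), $[\pi_{k+1}]_i\ge [B_k]_{ii}[\pi_k]_i \ge b[\pi_k]_i$, which iterates to $[\pi_k]_i \ge b^k/n \ge b^n/n$ since $b\in(0,1]$ and $k\le n$. Case 2 ($k\ge n-1$): writing $\pi_k = Q_k\pi_{k-n+1}$ with $Q_k := B_{k-1}\cdots B_{k-n+1}$, I aim to prove the entrywise bound $[Q_k]_{ij}\ge b^{n-1}$ for all $i,j$. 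Combined with the stochasticity of $\pi_{k-n+1}$, this yields $[\pi_k]_i = \sum_j [Q_k]_{ij}[\pi_{k-n+1}]_j \ge b^{n-1} \ge b^n/n$.

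The substantive step is the entrywise bound on $Q_k$. Expanding the product, $[Q_k]_{ij}$ is a sum over time-step paths $j = l_0, l_1,\ldots,l_{n-1}=i$ in which the transition $l_t \to l_{t+1}$ contributes $[B_{k-n+1+t}]_{l_{t+1},l_t}$; this factor is at least $b$ whenever it is either a self-loop or corresponds to an edge in $\bbG_{k-n+1+t}$, so it suffices to exhibit one path with every factor of this form. I will do so via a breadth-first argument on the time-layered graph: set $R_0 = \{j\}$ and take $R_{t+1}$ to be $R_t$ together with all out-neighbors in $\bbG_{k-n+1+t}$ of nodes in $R_t$. Strong connectivity of $\bbG_{k-n+1+t}$ forces an edge from $R_t$ to $[n]\setminus R_t$ whenever $R_t\subsetneq [n]$, so $|R_{t+1}|\ge |R_t|+1$ until saturation, yielding $R_{n-1}=[n]$. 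For any target $i$, tracing back through the BFS levels produces a path from $j$ to $i$ of length at most $n-1$; padding with self-loops at $i$ (each of weight $\ge b$) to length exactly $n-1$ yields a path of total weight $\ge b^{n-1}$, which is the desired bound. The main obstacle will be this reachability construction: because the edge set varies with $t$, no fixed-graph shortest path suffices, but BFS circumvents this by tracking only the cumulative reachable set, and self-loop padding bridges the length gap without degrading the $b^{n-1}$ bound.
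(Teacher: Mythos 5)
Your proof is correct and follows essentially the same route as the paper: induction for stochasticity, then a two-case split on $k$ using the positive diagonals of $B_k$ for small $k$ and the entrywise positivity of a product of consecutive $B$'s for large $k$. The only difference is cosmetic: you use a product of $n-1$ matrices (getting the bound $b^{n-1}$) and spell out the reachability argument that the paper merely asserts for its product of $n$ matrices, both of which suffice for the stated bound $b^n/n$.
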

	\begin{proof}
	We prove  that each $\pi_k$ is stochastic by using the mathematical induction on~$k$.
	For $k=0$, the vector $\pi_0=\frac{1}{n}\1$ is stochastic. 
	Suppose now the vector $\pi_k$ is stochastic. 
	Choose any index $i\in[n]$ and consider the entry $[\pi_{k+1}]_i$. By the definition of $\pi_{k+1}$ in~\eqref{eq-pik}, since the entries in $B_k$ and $\pi_k$ are nonnegative, we
	have 
	$[\pi_{k+1}]_i =\sum_{j=1}^n [B_k]_{ij}[\pi_k]_j\ge0.$
	Furthermore, by summing the entries of $\pi_{k+1}$, and using the 
	facts that $B_k$ is column stochastic and 
	$\pi_k$ is a stochastic vector,
	we obtain
	$\1^{\T}\pi_{k+1}=\1^{\T}B_k\pi_k=\1^T\pi_k=1.$
	Thus, $\pi_{k+1}$ is a stochastic vector.
        
    To prove the lower bound result, we consider separately the case for $k=0,\ldots, n-1$ and the case $k\ge n$. The bound is obviously valid for $k=0$, since $\pi_0=\frac{1}{n}\1$.
    Let $k$ be such that  $1\le k\le n-1$. By the definition of $\pi_k$, we have
    \[\pi_k=B_{k-1}\cdots B_0 \pi_0=\frac{1}{n} B_{k-1}\cdots B_0\1.\]
    Hence, it follows that
    \[[\pi_k]_i=\frac{1}{n}[B_{k-1}\cdots B_0\1]_i = \frac{1}{n}\sum_{j=1}^n [B_{k-1}\cdots B_0]_{ij} \ge \frac{1}{n} [B_{k-1}\cdots B_0]_{ii}\ge\frac{b^k}{n},\]
    where the last inequality follows from $[B_{k-1}\cdots B_0]_{ii}\ge b^k$, which is valid since all matrices $B_k$ have positive diagonals with diagonal entries larger than or equal to $b$ (see Assumption~\ref{asm-bmatrices}). Since $k<n$, it follows that
    \[[\pi_k]_i\ge \frac{b^k}{n}>\frac{b^n}{n}\qquad\hbox{for all $k=1,\ldots,n-1$}.\]
        
    Now, consider the case $k\ge n$. Using the definition of $\pi_k$, we obtain
    \[\pi_k=B_{k-1}\cdots B_{k-n} \pi_{k-n}.\] 
    We note that the matrix $[B_{k-1}\cdots B_{k-n}]$ has all entries positive as it represents directed paths among the nodes in the composition of the strongly connected graphs 
    $\bbG_{k-1},\ldots,\bbG_{k-n}$. Moreover, every entry of
    $[B_{k-1}\cdots B_{k-n}]$ is at least as large as $b^n$, i.e.,
    \[[B_{k-1}\cdots B_{k-n}]_{ij}\ge b^n\qquad\hbox{for all }i,j\in[n],\]
    which follows by Assumption~\ref{asm-bmatrices} ensuring that each $B_t$ has positive entries on links in the graph $\bbG_t$, which are at least large as $b$. Hence, it follows that
    \[[\pi_k]_i=\sum_{j=1}^n[B_{k-1}\cdots B_{k-n}]_{ij} [\pi_{k-n}]_j
    \ge \sum_{j=1}^n b^n [\pi_{k-n}]_j=b^n>\frac{b^n}{n},\] 
	where the last equality holds since $\pi_{s}$ is a stochastic vector for all $s$.
	\end{proof}

    %----------------------------------------------------------------------------------------
	\subsection{Contractive Property of Gradient Method}\label{ss-grad}
	%-----------------------------------------------------------------------------------------
% 	Finally, we present a result describing a contraction property of the gradient mapping when the objective function is strongly convex and has Lipschitz continuous gradients.
	
	\begin{lemma}\label{lem-contraction} 
	For a $\mu$-strongly convex function $F$ with $L$-Lipschitz continuous gradients, 
	at the point $x^*=\argmin_{x} F(x)$, we have
	\[\|x-x^*-\a \nabla F(x)\|\le q(\a) \|x-x^*\|\qquad\hbox{for all $x$ and for all $\a$ with $0<\a<2L^{-1}$},\]
	where $q(\a)=\max\{|1-\a\mu|,|1-\a L|\}<1$. 
	%The minimal value $q^*$ of $q_k(\a)$ 
	%is attained at $\a^*=\frac{2}{L+\mu}$ and given by $q^*=\frac{L-\mu}{L+\mu}$.
	\end{lemma}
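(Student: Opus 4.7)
The plan is to square both sides of the claimed inequality and bound
\[\|x-x^*-\alpha\nabla F(x)\|^2 = \|x-x^*\|^2 - 2\alpha\langle\nabla F(x),x-x^*\rangle + \alpha^2\|\nabla F(x)\|^2\]
from above by $q(\alpha)^2\|x-x^*\|^2$. The key observation is that since $x^*=\argmin_x F(x)$ and $F$ is differentiable, $\nabla F(x^*)=0$, so I can write $\nabla F(x)=\nabla F(x)-\nabla F(x^*)$ and apply the standard consequences of $\mu$-strong convexity plus $L$-Lipschitz-gradient assumptions.

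I will use three such consequences: (i) $\|\nabla F(x)\| \ge \mu\|x-x^*\|$, from strong convexity at $x^*$ and Cauchy--Schwarz; (ii) $\|\nabla F(x)\| \le L\|x-x^*\|$, directly from the $L$-Lipschitz gradient; and (iii) the strengthened co-coercivity inequality for strongly convex smooth functions,
\[\langle\nabla F(x),x-x^*\rangle \;\ge\; \frac{\mu L}{\mu+L}\|x-x^*\|^2 + \frac{1}{\mu+L}\|\nabla F(x)\|^2,\]
which is a well-known Nesterov-type bound under Assumptions~\ref{asm-functions}--\ref{asm-strconv}.

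I will then split on whether $\alpha \le 2/(\mu+L)$ or $\alpha \ge 2/(\mu+L)$. A direct comparison of $|1-\alpha\mu|$ and $|1-\alpha L|$ on these two subintervals shows $q(\alpha)=1-\alpha\mu$ in the first case and $q(\alpha)=|1-\alpha L|$ in the second. In the first case, substituting (iii) into the expanded square leaves a \emph{nonnegative} coefficient on $\|\nabla F(x)\|^2$, so I bound that term further from below using (i); the algebra then collapses exactly to $(1-\alpha\mu)^2\|x-x^*\|^2$. In the second case, the coefficient on $\|\nabla F(x)\|^2$ is \emph{non-positive}, so I instead use the upper bound (ii); a parallel simplification yields $(1-\alpha L)^2\|x-x^*\|^2$. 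The strict inequality $q(\alpha)<1$ for $\alpha\in(0,2L^{-1})$ is then immediate: $\mu \le L$ and $0<\alpha<2/L\le 2/\mu$ force both $|1-\alpha L|<1$ and $|1-\alpha\mu|<1$.

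The main subtle point is that the weaker, individually used bounds $\langle\nabla F(x),x-x^*\rangle \ge \mu\|x-x^*\|^2$ and $\langle\nabla F(x),x-x^*\rangle \ge \|\nabla F(x)\|^2/L$ are not, on their own, strong enough to yield the sharp constant in \emph{both} cases; it is Nesterov's combined inequality (iii) that makes the two case-wise algebraic identities close up exactly. A conceptually simpler alternative (if one strengthened Assumption~\ref{asm-functions} to $F\in C^2$) would be to write $\nabla F(x)=\bigl(\int_0^1\nabla^2 F(x^*+t(x-x^*))\,dt\bigr)(x-x^*)=M(x-x^*)$ with $\mu I\preceq M\preceq L I$ and invoke the spectral bound $\|I-\alpha M\|_{\mathrm{op}}\le\max_{\lambda\in[\mu,L]}|1-\alpha\lambda|=q(\alpha)$, but under the stated $C^1$ hypothesis only the inequality-based route is fully self-contained.
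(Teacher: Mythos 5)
The paper gives no proof of this lemma at all: it defers to Theorem~3 of Chapter~1 in \cite{Polyak} (for $C^2$ functions, i.e.\ the Hessian/spectral argument you mention as an alternative) and to Lemma~10 of \cite{Qu2017} for the general differentiable case. Your argument is essentially the standard proof of the latter: expand the square, invoke Nesterov's combined strong-convexity/co-coercivity inequality (your item (iii)), and split at $\alpha=2/(\mu+L)$, which is exactly where the two branches of $q(\alpha)$ cross. The substance is correct: with (iii) substituted, the case-wise computations do close up exactly to $(1-\alpha\mu)^2\|x-x^*\|^2$ and $(1-\alpha L)^2\|x-x^*\|^2$, and your remark that the weaker individual bounds cannot deliver the sharp constant in both regimes is accurate. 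So you have supplied a correct, self-contained proof of a statement the paper only cites.

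One expository slip should be corrected: the sign labels in your two cases are swapped. After substituting (iii), the coefficient of $\|\nabla F(x)\|^2$ is $\alpha\bigl(\alpha-\tfrac{2}{\mu+L}\bigr)$, which is \emph{non-positive} when $\alpha\le 2/(\mu+L)$ (your first case) and \emph{nonnegative} when $\alpha\ge 2/(\mu+L)$ (your second case) --- the opposite of what you wrote. The inequalities you actually apply ((i), the lower bound $\|\nabla F(x)\|\ge\mu\|x-x^*\|$, in the first case; (ii), the upper bound, in the second) are the correct ones for the \emph{true} signs, so the proof goes through; but as literally stated, ``the coefficient is nonnegative, so I bound that term from below'' would yield a lower bound on the whole expression rather than the required upper bound. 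Fix the two sign words and the write-up is sound.
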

	The proof of Lemma~\ref{lem-contraction} can be found within the proof of 
	Theorem 3 of Chapter 1 in~\cite{Polyak} for a twice continuously differentiable function. 
	The result has been extended in~\cite{Qu2017} (see Lemma 10 therein) to a more general case of a differentiable function.
	
	%%%%%%%%%%%%%%%%%%%%%%%%%%
	\section{Convergence Analysis}\label{sec-analysis}
	%%%%%%%%%%%%%%%%%%%%%%%%%%
	In this section, we specify
	and analyze the behavior of three quantities that are critical components of the convergence proof of the method: the distance of a suitably defined weighted average $\hat x^k$ from the solution $x^*$ of problem~\eqref{eq-problem}, a weighted dispersion of the iterates $x_i^k$ from the weighted average $\hat x^k$, and a weighted dispersion of the agents' directions $y_i^k$ from the sum $\sum_{i=1}^n y_i^k$.

	%------------------------------------------------------------------------------------------------
	\subsection{Weighted Averages of Agents' $x$-variables}\label{ssec-waver}
	%------------------------------------------------------------------------------------------------
	We define $\hat{x}^k$ to be the $\phi_k$-weighted averages of the iterates $x_i^k$ produced by the $AB$/Push-Pull method~\eqref{eq-met}, i.e.,
	\begin{equation}\label{eq-waverx}
	\hat x^k = \sum_{i=1}^n [\phi_k]_i x_i^k\qquad\hbox{for all }k\ge0,
	\end{equation}
	where $\{\phi_k\}$ is the sequence of stochastic vectors satisfying $\phi_{k+1}^{\T}A_k=\phi_k^{\T}$ (see Lemma~\ref{lem-amatrices}). In the next proposition, we establish a recursion relation for $\hat x^k$,
    and a relation for their distance from the optimal solution $x^*$ of problem~\eqref{eq-problem}. 
	
	\begin{proposition}
	\label{prop-waverx}
	Let Assumptions~\ref{asm-amatrices}-\ref{asm-strconv} hold. 
	Then, the following statements are valid:
	\begin{itemize}
	\item[(a)] The weighted average sequence $\{\hat x^k\}$ defined in~\eqref{eq-waverx} satisfies,
	\begin{equation}\label{eq-hatx}
	\hat x^{k+1}=\hat x^k - \a \sum_{i=1}^n [\phi_{k+1}]_i y_i^k \qquad\hbox{for all $k\ge0$}.
	\end{equation}
% 	$\displaystyle \hat x^{k+1}=\hat x^k - \a \sum_{i=1}^n [\phi_{k+1}]_i y_i^k
% 	\qquad\hbox{for all $k\ge0$}.$
    \item[(b)] 
	Let the stepsize $\a$ in method~\eqref{eq-met} be such that
	$0<\a<\tfrac{2}{nL}$, where $L$ is the gradient Lipschitz constant 
	from Assumption~\ref{asm-functions}.
	Then, we have for all $k\ge0$, 
	\begin{align*}
	\|\hat x^{k+1}-x^*\| \le q_k(\a) \|\hat x^k - x^*\| + \a L\sqrt{\dfrac{n}{\min (\phi_{k})}}D(\bx^k,\phi_k)+\a S(\by^k,\pi_k),
	\end{align*}
	where $q_k(\a) = \max\{|1-\a n \min(\pi_k)\mu|,|1-\a n \min(\pi_k)L|\} <1$.
% 	and $x^*$ is the unique optimal solution of problem~\eqref{eq-problem}.
	\end{itemize}
	\end{proposition}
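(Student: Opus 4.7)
\emph{Part (a).} I would multiply both sides of the $x$-update \eqref{eq-x} by $[\phi_{k+1}]_i$, sum over $i\in[n]$, and swap the order of summation in the mixing term:
\[
\sum_{i=1}^n [\phi_{k+1}]_i \sum_{j=1}^n [A_k]_{ij}x_j^k \;=\; \sum_{j=1}^n \bigl[\phi_{k+1}^{\T}A_k\bigr]_j\, x_j^k.
\]
Then apply the left-stationarity identity $\phi_{k+1}^{\T}A_k=\phi_k^{\T}$ from Lemma~\ref{lem-amatrices} to collapse this to $\sum_j [\phi_k]_j x_j^k=\hat x^k$, which yields~\eqref{eq-hatx}.

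\emph{Part (b).} Starting from~\eqref{eq-hatx}, I would subtract $x^*$ and, motivated by the form of $q_k(\alpha)$, add and subtract the quantity $\alpha\, n\min(\pi_k)\,\nabla f(\hat x^k)=\alpha\min(\pi_k)\sum_i\nabla f_i(\hat x^k)$. Using $\sum_i y_i^k=\sum_i\nabla f_i(x_i^k)$ from Lemma~\ref{lem-sumgrad}, this produces the decomposition
\[
\hat x^{k+1}-x^* \;=\; \bigl[\hat x^k-x^*-\alpha\, n\min(\pi_k)\,\nabla f(\hat x^k)\bigr] \;+\; E_1 \;+\; E_2,
\]
where $E_1=-\alpha\min(\pi_k)\sum_i\bigl(\nabla f_i(x_i^k)-\nabla f_i(\hat x^k)\bigr)$ is a Lipschitz-type error and $E_2=-\alpha\sum_i\bigl([\phi_{k+1}]_i-\min(\pi_k)\bigr)y_i^k$ is a weighting-mismatch error.

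The three resulting terms are then handled as follows. First, Lemma~\ref{lem-contraction} applied to $f$ with effective stepsize $\beta:=\alpha n\min(\pi_k)$ gives the contraction $q_k(\alpha)\|\hat x^k-x^*\|$; the required condition $\beta<2/L$ follows from $\alpha<2/(nL)$ together with $n\min(\pi_k)\le 1$ (since $\pi_k$ is a stochastic vector of length $n$). Second, $\|E_1\|$ is bounded by the Lipschitz assumption~\ref{asm-functions} and Cauchy--Schwarz through the relation $\sum_i\|x_i^k-\hat x^k\|\le\sqrt{n/\min(\phi_k)}\,D(\bx^k,\phi_k)$ (cf.~\eqref{eq-norm-anorm}), and the factor $\min(\pi_k)\le 1$ is discarded to match the stated bound. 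Third, $\|E_2\|$ is bounded by $\alpha S(\by^k,\pi_k)$, which is the dispersion quantity measuring how much the $\phi_{k+1}$-weighted combination of the $y_i^k$ deviates from a $\min(\pi_k)$-scaled sum.

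\emph{Main obstacle.} The delicate point is choosing the correct anchor vector for the ``add and subtract'' step. The naive choice of adding and subtracting $\sum_i[\phi_{k+1}]_i\nabla f_i(\hat x^k)$ would produce a gradient-descent residual governed by a $\phi_{k+1}$-weighted combination of the individual $f_i$, whose strong convexity/Lipschitz constants need not be $\mu,L$. Using instead the ``clean'' global gradient $n\min(\pi_k)\,\nabla f(\hat x^k)$ routes the contraction through the global $f$, at the price of a reduced effective stepsize $\alpha n\min(\pi_k)$ --- which is exactly where the $\pi_k$-dependence in $q_k(\alpha)$ comes from --- while leaving two errors that are each controllable by the dispersion quantities $D(\bx^k,\phi_k)$ and $S(\by^k,\pi_k)$. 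Verifying that $q_k(\alpha)<1$ under the stated stepsize bound, and that no hidden $\min(\pi_k)$ factor is needed in the $D$-term, then completes the argument.
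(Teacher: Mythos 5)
Part (a) of your plan is exactly the paper's argument and is fine; the contraction term and the $E_1$ bound in part (b) are also correct (including the check that $\alpha n\min(\pi_k)<2/L$). The gap is the claim $\|E_2\|\le\alpha S(\by^k,\pi_k)$. Writing $s=\sum_{\ell=1}^n y_\ell^k$ and $y_i^k=[\pi_k]_i\bigl(y_i^k/[\pi_k]_i-s\bigr)+[\pi_k]_i s$, your error term splits as
\[
E_2=-\alpha\sum_{i=1}^n\bigl([\phi_{k+1}]_i-\min(\pi_k)\bigr)[\pi_k]_i\Bigl(\frac{y_i^k}{[\pi_k]_i}-s\Bigr)\;-\;\alpha\Bigl(\sum_{i=1}^n[\phi_{k+1}]_i[\pi_k]_i-\min(\pi_k)\Bigr)s .
\]
The first piece is indeed $O(\alpha S(\by^k,\pi_k))$ by Cauchy--Schwarz, but the second piece is proportional to $s$ itself and is not controlled by $S(\by^k,\pi_k)$: if $y_i^k=[\pi_k]_i s$ for all $i$, then $S(\by^k,\pi_k)=0$ while $\sum_i[\phi_{k+1}]_i[\pi_k]_i-\min(\pi_k)$ is generically nonzero (take $n=2$, $\phi_{k+1}=(1/2,1/2)$, $\pi_k=(0.9,0.1)$, which gives $0.4$), so $\|E_2\|$ can be arbitrarily large while $\alpha S(\by^k,\pi_k)=0$. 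Patching this by bounding $\|s\|$ through Lemma~\ref{lem-xdiff} reintroduces $\|\hat x^k-x^*\|$ and $D(\bx^k,\phi_k)$ with extra $O(\alpha)$ coefficients, yielding a coefficient strictly larger than $q_k(\alpha)$ on $\|\hat x^k-x^*\|$; the stated inequality would not follow.

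The paper avoids this by choosing an $i$-dependent anchor: it adds and subtracts $\alpha\sum_i[\phi_{k+1}]_i\,n[\pi_k]_i\nabla f(\hat x^k)$, applies Lemma~\ref{lem-contraction} separately with effective stepsize $\alpha n[\pi_k]_i$ for each $i$ (then takes the worst contraction factor $q_k(\alpha)$), and is left with residuals $y_i^k-n[\pi_k]_i\nabla f(\hat x^k)=[\pi_k]_i\bigl(y_i^k/[\pi_k]_i-n\nabla f(\hat x^k)\bigr)$. Inserting $s$ there, the mean part is $s-n\nabla f(\hat x^k)=\sum_\ell\bigl(\nabla f_\ell(x_\ell^k)-\nabla f_\ell(\hat x^k)\bigr)$, a pure gradient difference controlled by $L\sqrt{n/\min(\phi_k)}\,D(\bx^k,\phi_k)$, while the dispersion part gives exactly $S(\by^k,\pi_k)$. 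The idea your plan misses is that the anchor must scale with $[\pi_k]_i$ so that the uncontrolled quantity $\sum_\ell y_\ell^k$ only ever appears in the combination $\sum_\ell y_\ell^k-n\nabla f(\hat x^k)$.
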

	\begin{proof}
	(a)~By the definition of $\hat x^{k+1}$ and
	the $x$-update relation given in~\eqref{eq-x}, we have 
	\[\hat x^{k+1}=\sum_{i=1}^n [\phi_{k+1}]_i x_i^{k+1}
	=\sum_{i=1}^n [\phi_{k+1}]_i \sum_{j=1}^n [A_k]_{ij}x_j^k - \a \sum_{i=1}^n [\phi_{k+1}]_i y_i^k.\]
	For the double-sum term, it follows that
	\[\sum_{i=1}^n [\phi_{k+1}]_i \sum_{j=1}^n [A_k]_{ij}x_j^k
	=\sum_{j=1}^n \left(\sum_{i=1}^n [\phi_{k+1}]_i [A_k]_{ij}\right) x_j^k
	=\sum_{j=1}^n [\phi_k]_j x_j^k=\hat x^k,\]
	where the second equality follows by $\phi_{k+1}^{\T}A_k=\phi_k^{\T}$ (see
	Lemma~\ref{lem-amatrices}), and the last equality uses the definition of $\hat x^k$, thus, establishes the desired relation in part~(a).
	\noindent
	(b)~Under Assumption~\ref{asm-strconv}, the unique minimizer $x^*$ of $f(x)$ over $x\in\re^p$ exists. 
	By subtracting the optimal point $x^*$ from both sides of the relation in part (a) 
	(see~\eqref{eq-hatx}), and by
	adding and subtracting $\sum_{i=1}^n [\phi_{k+1}]_i \a n [\pi_k]_i \nabla f(\hat x^k)$, we obtain
	\[\hat x^{k+1}-x^*=\hat x^k - x^* - \sum_{i=1}^n [\phi_{k+1}]_i\a n [\pi_k]_i\nabla f(\hat x^k) 
	+\a \sum_{i=1}^n [\phi_{k+1}]_i\left(n [\pi_k]_i \nabla f(\hat x^k) -  y_i^k\right).\]
	Therefore, by the convexity of the norm and the fact that $\phi_{k+1}$ is stochastic, we have
	\[\|\hat x^{k+1}-x^*\|\le \sum_{i=1}^n [\phi_{k+1}]_i\|\hat x^k - x^* - \a n [\pi_k]_i\nabla f(\hat x^k) \|
	+\a \sum_{i=1}^n[\phi_{k+1}]_i \|y_i^k-n [\pi_k]_i \nabla f(\hat x^k)\|.\]
	By Assumption~\ref{asm-functions} and Assumption~\ref{asm-strconv}, the function $f$ is $\mu$-strongly convex and 
	has $L$-Lipschitz continuous gradients. 
	Thus, 
	for a stepsize $\a$ satisfying $\a\in  (0,\tfrac{2}{n[\pi_k]_iL})$, for all $i\in [n]$, by Lemma~\ref{lem-contraction}
	it follows that 
% 	\an{Ella, how about we assume that $\sup_{k\ge0,i\in[n]}\{1-\a n[\pi_k]_i\mu\}}<1$ within the proposition. We can later comment on this when we put all the conditions on $\a$ together. The reason is that the condition $1-\a b^n \mu$ is very pessimistic -- the worst case scenario as it applies to any graph sequence; while for regular graphs, for example, we do have $n[\pi_k]_i=1$ for all $i$ and $k$}
	\[\|\hat x^k - x^* - \a n [\pi_k]_i\nabla f(\hat x^k) \| \le q_{i,k}(\a) \|\hat x^k - x^*\|,\]
	with $q_{i,k}(\a)=\max\{|1-\a n [\pi_k]_i\mu|,|1-\a n [\pi_k]_i L|\}$. 
% 	Let $q(\a) = \max_{k\ge0,i\in[n]}\{|1-\a n [\pi_k]_i\mu|,|1-\a n [\pi_k]_i L|\}$, we have that $q_k(\a) \le \max\{|1-\a b^n\mu|,|1-\a b^nL|\}<1$ since $[\pi_k]_i\ge \frac{b^n}{n}$ (see Lemma~\ref{lem-bmatrices}).

	Let $q_k(\a) = \max\{|1-\a n \min(\pi_k)\mu|,|1-\a n \min(\pi_k)L|\}<1$, using the preceding relation with the stochasticity of $\phi_{k+1}$ yields
	\begin{align*}
    \sum_{i=1}^n[\phi_{k+1}]_i\|\hat x^k - x^*-\a n [\pi_k]_i\nabla f(\hat x^k)\| \le \sum_{i=1}^n[\phi_{k+1}]_i q_{i,k}(\a) \|\hat x^k - x^*\| \le q_k(\a) \|\hat x^k - x^*\|.
    \end{align*}
    % \ella{When $0<\a<\tfrac{2}{n(L+\mu)}<\tfrac{2}{n[\pi_k]_i(L+\mu)}$, we have $q_{i,k}(\a)=1-\a n[\pi_k]_i\mu <1-\a b^n \mu<1$ since $[\pi_k]_i\ge \frac{b^n}{n}$. Can I use $q_k(\a)=1-\a\mu b^n$ for $0<\a<\tfrac{2}{n(L+\mu)}$ instead?}\\
    % {\color{red} \bf{AN - not sure this is true:} where in the last inequality we use $q_{i,k}(\a)=\max\{|1-\a n [\pi_k]_i\mu|,|1-\a n [\pi_k]_i L|\}\le \max\{|1-\a n\mu|,|1-\a nL|\}=q_k(\a)<1$} 
    Therefore,
    \begin{align}\label{eq-hatx1}
    \|\hat x^{k+1}-x^*\| \le q_k(\a) \|\hat x^k - x^*\| + \a \sum_{i=1}^n[\phi_{k+1}]_i \|y_i^k-n [\pi_k]_i \nabla f(\hat x^k)\|.
    \end{align}
	Since $\max(\phi_{k+1})\le 1$, to estimate the last term in~\eqref{eq-hatx1}, we factor out $[\pi_k]_i$ as follows
     \begin{align*}
    \sum_{i=1}^n[\phi_{k+1}]_i\|y_i^k\!-\!n [\pi_k]_i \nabla f(\hat x^k)\| \le  \!\sum_{i=1}^n\|y_i^k\!-\!n [\pi_k]_i \nabla f(\hat x^k)\| = \!\sum_{i=1}^n[\pi_k]_i \left\|\dfrac{y_i^k}{[\pi_k]_i}\!-\!n \nabla f(\hat x^k)\right\|.
    \end{align*}
    We add and subtract $\sum_{\ell=1}^n y_\ell^k$, and use the triangle inequality for the norm to obtain
    \begin{align*}
    &\sum_{i=1}^n[\pi_k]_i \left\|\dfrac{y_i^k}{[\pi_k]_i}-n \nabla f(\hat x^k)\right\|  \le \sum_{i=1}^n[\pi_k]_i \left\|\dfrac{y_i^k}{[\pi_k]_i}-\sum_{\ell=1}^n y_\ell^k\right\| +\sum_{i=1}^n[\pi_k]_i\left\|\sum_{\ell=1}^n y_\ell^k-n\nabla f(\hat x^k)\right\|\cr
    &\!\le\! \sqrt{\sum_{i=1}^n[\pi_k]_i \left\|\dfrac{y_i^k}{[\pi_k]_i}\!-\!\sum_{\ell=1}^n y_\ell^k\right\|^2} \!+\left\|\sum_{\ell=1}^n y_\ell^k\!-\!n\nabla f(\hat x^k)\right\|
    \!\le S(\by^k,\pi_k) \!+\left\|\sum_{\ell=1}^n y_\ell^k\!-\!n\nabla f(\hat x^k)\right\|.
    \end{align*}
    Combining the two preceding relations yields
    \begin{align} \label{eq-hatx2}
     \sum_{i=1}^n[\phi_{k+1}]_i\|y_i^k-n [\pi_k]_i \nabla f(\hat x^k)\| \le S(\by^k,\pi_k) +\left\|\sum_{\ell=1}^n y_\ell^k-n\nabla f(\hat x^k)\right\|.
    \end{align}
    By Lemma~\ref{lem-sumgrad},
	$\sum_{\ell=1}^n y_\ell^k=\sum_{\ell=1}^n \nabla f_\ell(x_\ell^k)$,
	hence, in view of $f=\frac{1}{n}\sum_{\ell=1}^n f_\ell$, we have
	\[\left\|\sum_{\ell=1}^n y_\ell^k-n\nabla f(\hat x^k)\right\|
	= \left\|\sum_{\ell=1}^n \left(\nabla f_\ell(x_\ell^k) - \nabla f_\ell(\hat x^k)\right)\right\|
	\le \sum_{\ell=1}^n \|\nabla f_\ell(x_\ell^k) -  \nabla f_\ell(\hat x^k)\|.\]
	Since each $f_i$ has L-Lipschitz continuous gradients (by Assumption~\ref{asm-functions}), it follows that
	\begin{align}\label{eq-hatx3}
	\left\|\sum_{\ell=1}^n y_\ell^k-n\nabla f(\hat x^k)\right\| \le L\sum_{\ell=1}^n \|x_\ell^k -  \hat x^k\| \le  L\sqrt{\dfrac{n}{\min (\phi_{k})}}D(\bx^k,\phi_k).
	\end{align}
	Substituting \eqref{eq-hatx2} and \eqref{eq-hatx3} in relation~\eqref{eq-hatx1} gives the desired relation in part (b).
% 	we obtain 
% 	\begin{align*}
% 	\|\hat x^{k+1}-x^*\| \le q_k(\a) \|\hat x^k - x^*\| + \a L\sqrt{\dfrac{n}{\min (\phi_{k})}}D(\bx^k,\phi_k)+\a S(\by^k,\pi_k),
% 	\end{align*}
	\end{proof}

The condition $q_k(\a)<1$ of Proposition~\ref{prop-waverx}(b) holds for example when  $\a\in(0,\tfrac{2}{nL})$.

    %---------------------------------------------------------------------------------------------------
	\subsection{Weighted Dispersion of Agents' $x$-variables}\label{ssec-wdisp}
	%---------------------------------------------------------------------------------------------------
	In this section, we define and analyze the behavior of a $\phi_k$-weighted dispersion of the iterates $x_i^k$, $i\in[n]$, of the method~\eqref{eq-met} from their weighted average $\hat x^k$, i.e., 
	\begin{equation}\label{eq-def-disp}
	D(\bx^k,\phi_k)=\sqrt{\sum_{j=1}^n [\phi_k]_j \|x_j^k -\hat x^k\|^2} \qquad\hbox{for all }k\ge0,
	\end{equation}
	where the stochastic vectors $\phi_k$ satisfy $\phi_{k+1}^{\T}A_k=\phi_k^{\T}$
% 	( Lemma~\ref{lem-amatrices}) 
	and $\bx^k=(x_1^k,\ldots,x_n^k)$.
	
	The dispersion $D(\bx^k,\phi_k)$ can be interpreted as the $\phi_k$-weighted
	norm of the difference between $\bx^k$ and the vector $\hat \bx^k=(\hat x^k,\ldots,\hat x^k)$ consisting of $n$-copies of 
	$\hat x^k$, i.e.,
	\begin{equation}\label{eq-disp-norm}
	D(\bx^k,\phi_k)=\|\bx^k - \hat \bx^k\|_{\phi_k}.\end{equation}
% 	This interpretation will be particularly useful, as it will allow us to use the norm properties, such as the triangle inequality and the positive scaling property. 
	
% 	To take advantage of the alternative view of the dispersion $D(\bx^k,\phi_k)$ given in~\eqref{eq-disp-norm}, it will be useful to re-write the $x$-iterate updates of the method in the Cartesian product space $\re^p\times\cdots\times\re^p$. 
% 	Specifically, 
	\noindent Using the definition of $x_i^{k+1}$ in~\eqref{eq-x}, we can write
	\begin{equation}\label{eq-xandz}
	x_i^{k+1}=z_i^k -\a y_i^k,\qquad z_i^k=\sum_{j=1}^n [A_k]_{ij} x_j^k,
	\qquad\hbox{for all $i\in[n]$ and all $k\ge 0$}.\end{equation}
	Define $\bx^{k+1}=(x_1^{k+1},\ldots, x_n^{k+1})$
	and, similarly, define $\bz^k=(z_1^k,\ldots,z_n^k)$ and $\by^k=(y_1^k,\ldots,y_n^k)$.
	Then, we can write the preceding relations compactly as follows
	\begin{equation}\label{eq-xcomp}
	\bx^{k+1} =\bz^k - \a \by^k\qquad\hbox{for all $k\ge 0$}.\end{equation}
	
% 	We start our analysis by considering generic vectors of the form 
% 	$z_i=\sum_{j=1}^n A_{ij}x_j$, $i\in[n]$, where $A$ is an $n\times n$ row-stochastic matrix and $x_i\in\re^p$ for all $i$.  
	We start our analysis by recalling the next lemma:
% 	for the upper bound of the weighted dispersion of the vectors $z_1,\ldots,z_n$ in terms of a weighted dispersion of the original vectors $x_1,\ldots, x_n$.
	
	\begin{lemma}[\cite{nguyen2022distributed}, Lemma 6]\label{lem-basic-xcontract}
	Let $\bbG=([n],{\cal E})$ be a strongly connected directed graph,  
	and let $A$ be an row-stochastic matrix that is compatible with the graph and has positive diagonal entries, i.e., $A_{ij}>0$ when $j=i$ and $(j,i)\in {\cal E}$, and $A_{ij}=0$ otherwise. 
	Also, let $\phi$  be a stochastic vector and let $\pi$ be a nonnegative vector such that $\pi^{\T}A=\phi^{\T}$.
	
	Let $x_1,\ldots,x_n\in\re^p$
	be a given collection of vectors, and consider the vectors $z_i=\sum_{j=1}^n A_{ij} x_j$ for all $i\in[n]$. Then, we have
	\[\sum_{i=1}^n \pi_i\|z_i- u\|^2 \le 
	\sum_{j=1}^n \phi_j\|x_j - u\|^2
	-  
	\frac{\min(\pi) \left(\min(A^+)\right)^2}{\max^2(\phi)\mathsf{D}(\bbG)\mathsf{K}(\bbG)}
	\sum_{j=1}^n \phi_j \|x_j - \hat x_\phi \|^2 ~~ \hbox{for all }u\in\re^p,\]
	where $\mathsf{D}(\bbG)$ and $\mathsf{K}(\bbG)$ are the diameter and the maximal edge-utility of $\bbG$,
	respectively.
	\end{lemma}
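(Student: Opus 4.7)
The plan is to decompose $\sum_i \pi_i\|z_i-u\|^2$ into a "linear" part that telescopes into $\sum_j\phi_j\|x_j-u\|^2$ via the identity $\pi^{\T}A=\phi^{\T}$, plus a negative "dispersion-of-the-mixing" term, and then to lower-bound that dispersion term by the weighted variance of the $x_j$'s around $\hat x_\phi$ using the graph structure together with Lemma~\ref{lem-xdiverse}.

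First, since $A$ is row-stochastic, the weights $\{A_{ij}\}_{j=1}^n$ sum to one, so I can apply identity~\eqref{eq-gen-aver} with $\g_j=A_{ij}$ and $u_j=x_j$ to obtain, for every $i\in[n]$ and every $u\in\re^p$,
\[
\|z_i-u\|^2=\sum_{j=1}^n A_{ij}\|x_j-u\|^2-\sum_{j=1}^n A_{ij}\|x_j-z_i\|^2.
\]
Multiplying by $\pi_i$, summing over $i$, and using $\sum_i \pi_i A_{ij}=[\pi^{\T}A]_j=\phi_j$ on the first sum, I get the key identity
\[
\sum_{i=1}^n \pi_i\|z_i-u\|^2=\sum_{j=1}^n \phi_j\|x_j-u\|^2-\sum_{i=1}^n \pi_i\sum_{j=1}^n A_{ij}\|x_j-z_i\|^2.
\]
Thus the lemma reduces to lower-bounding the last (subtracted) term by $\frac{\min(\pi)(\min(A^+))^2}{\max^2(\phi)\,\mathsf{D}(\bbG)\mathsf{K}(\bbG)}\sum_j\phi_j\|x_j-\hat x_\phi\|^2$.

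To do this, I apply identity~\eqref{eq-aver-disp} again with $\g_j=A_{ij}$ to rewrite the inner dispersion around $z_i$ as a pairwise sum:
\[
\sum_{j=1}^n A_{ij}\|x_j-z_i\|^2=\tfrac12\sum_{j,\ell=1}^n A_{ij}A_{i\ell}\|x_j-x_\ell\|^2.
\]
Now comes the main obstacle: extracting edge contributions. For every directed edge $(j,\ell)\in\mathcal{E}$, graph-compatibility gives $A_{\ell j}>0$, and the positive-diagonal assumption gives $A_{\ell\ell}>0$; hence the term with $i=\ell$ contributes $A_{\ell j}A_{\ell\ell}\|x_j-x_\ell\|^2\ge(\min(A^+))^2\|x_j-x_\ell\|^2$ to the inner double sum. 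Pairing the ordered indices $(j,\ell)$ and $(\ell,j)$ cancels the factor $\tfrac12$, and bounding $\pi_i\ge\min(\pi)$ gives
\[
\sum_{i=1}^n \pi_i\sum_{j=1}^n A_{ij}\|x_j-z_i\|^2\ge \min(\pi)(\min(A^+))^2\sum_{(j,\ell)\in\mathcal{E}}\|x_j-x_\ell\|^2.
\]

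Finally, Lemma~\ref{lem-xdiverse} converts the edge sum into the full pairwise sum, losing a factor $\mathsf{D}(\bbG)\mathsf{K}(\bbG)$, and the bound $\phi_j\phi_\ell\le\max^2(\phi)$ together with \eqref{eq-aver-disp} applied to the stochastic vector $\phi$ gives
\[
\sum_{j,\ell=1}^n\|x_j-x_\ell\|^2\ge \frac{1}{\max^2(\phi)}\sum_{j,\ell=1}^n\phi_j\phi_\ell\|x_j-x_\ell\|^2=\frac{2}{\max^2(\phi)}\sum_{j=1}^n\phi_j\|x_j-\hat x_\phi\|^2.
\]
Chaining these bounds (noting that Lemma~\ref{lem-xdiverse} produces a factor of $1/2$ that cancels the $2$ here) yields the announced inequality. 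The only subtlety is the bookkeeping between ordered and unordered pair sums and making sure the extracted edge contributions do not double-count; once that is handled, the steps are routine.
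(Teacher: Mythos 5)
Your proof is correct, and it follows essentially the same route the paper uses: the paper itself only cites this lemma from \cite{nguyen2022distributed}, but its in-text proof of the companion column-stochastic result (Lemma~\ref{lem-basic-ycontract}) is built on exactly the same steps you use — telescoping the linear part via the stochasticity identity $\pi^{\T}A=\phi^{\T}$ together with \eqref{eq-gen-aver}/\eqref{eq-aver-disp}, extracting one edge contribution per directed edge using the positive diagonal ($A_{\ell\ell}A_{\ell j}\ge(\min(A^+))^2$), invoking Lemma~\ref{lem-xdiverse} to pass from the edge sum to the full pairwise sum, and converting back to the $\phi$-weighted dispersion at the cost of a $\max^2(\phi)$ factor. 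The factor-of-two bookkeeping you flag works out exactly as you describe, so no gap remains.
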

	
	The relation for the dispersion $D(\bx^k,\phi_k)$ is given in the following proposition.
	
	\begin{proposition}\label{prop-xcontract}
% 	Let the graph sequence $\{\bbG_k\}$ satisfy Assumption~\ref{asm-graphs},
% 	and let $\{A_k\}$ be a matrix sequence satisfying Assumption~\ref{asm-amatrices}.
% 	Moreover, let $\{B_k\}$ be a sequence of column-stochastic matrices. 
    Let Assumption~\ref{asm-graphs} and Assumption~\ref{asm-amatrices} hold.
% 	Consider the iterate sequences $\{x_i^k\}$, $i\in[n],$ produced by the method~\eqref{eq-met}, and let $D(\bx^k,\phi_k)$ be the $\phi_k$-weighted dispersion of the vectors $x_i^k$ as given in~\eqref{eq-def-disp}.
	We have for all $k\ge0$,
	\[D(\bx^{k+1},\phi_{k+1})\le 
	c_k D(\bx^k,\phi_k)
	+ \a \sqrt{\sum_{i=1}^n [\phi_{k+1}]_i \left\|y_i^k - \sum_{j=1}^n[\phi_{k+1}]_j y_j^k\right\|^2},\]
	where the scalar $c_k\in(0,1)$ is given by
	$c_k=\sqrt{1- \frac{\min(\phi_{k+1})\, a^2}
	{\max^2(\phi_k)\,\mathsf{D}(\bbG_k)\mathsf{K}(\bbG_k)} }$ and
	$\phi_k$ are the stochastic vectors from Lemma~\ref{lem-amatrices} 
% 	and $a>0$ is the lower bound on the positive entries of $A_k$ from Assumption~\ref{asm-amatrices}.
	\end{proposition}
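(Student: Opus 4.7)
The plan is to work with the identity $D(\bx^{k+1},\phi_{k+1})=\|\bx^{k+1}-\hat{\bx}^{k+1}\|_{\phi_{k+1}}$ from \eqref{eq-disp-norm}, the compact update $\bx^{k+1}=\bz^k-\a\by^k$ in \eqref{eq-xcomp}, and the averaging recursion \eqref{eq-hatx}, namely $\hat{x}^{k+1}=\hat{x}^k-\a\bar{y}^k$ with $\bar{y}^k:=\sum_{j=1}^n[\phi_{k+1}]_jy_j^k$. The crucial first step is the centering identity
\[\hat{z}^k:=\sum_{i=1}^n[\phi_{k+1}]_iz_i^k=\sum_{j=1}^n\left(\sum_{i=1}^n[\phi_{k+1}]_i[A_k]_{ij}\right)x_j^k=\sum_{j=1}^n[\phi_k]_jx_j^k=\hat{x}^k,\]
which uses $\phi_{k+1}^{\T}A_k=\phi_k^{\T}$ from Lemma~\ref{lem-amatrices}. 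Hence, for every $i\in[n]$, $x_i^{k+1}-\hat{x}^{k+1}=(z_i^k-\hat{z}^k)-\a(y_i^k-\bar{y}^k)$.

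Next, I would apply the triangle inequality for the $\phi_{k+1}$-weighted norm on $\re^p\times\cdots\times\re^p$ to obtain
\[D(\bx^{k+1},\phi_{k+1})\le\|\bz^k-\hat{\bz}^k\|_{\phi_{k+1}}+\a\|\by^k-\bar{\by}^k\|_{\phi_{k+1}},\]
where $\hat{\bz}^k$ and $\bar{\by}^k$ denote the $n$-copy vectors with blocks $\hat{z}^k$ and $\bar{y}^k$ respectively. The second right-hand term is exactly the $y$-deviation quantity $\a\sqrt{\sum_{i=1}^n[\phi_{k+1}]_i\|y_i^k-\sum_{j=1}^n[\phi_{k+1}]_jy_j^k\|^2}$ appearing in the claim, so it needs no further work.

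To bound the first term by $c_k D(\bx^k,\phi_k)$, I would invoke Lemma~\ref{lem-basic-xcontract} with the substitutions $A\leftarrow A_k$, $\phi\leftarrow\phi_k$, $\pi\leftarrow\phi_{k+1}$ (the required $\pi^{\T}A=\phi^{\T}$ holds by Lemma~\ref{lem-amatrices}), $\bbG\leftarrow\bbG_k$, and the choice $u=\hat{x}^k$. Since each $A_k$ has positive diagonal entries with $\min(A_k^+)\ge a$ by Assumption~\ref{asm-amatrices}, the lemma gives
\[\sum_{i=1}^n[\phi_{k+1}]_i\|z_i^k-\hat{x}^k\|^2\le\left(1-\frac{\min(\phi_{k+1})\,a^2}{\max^2(\phi_k)\mathsf{D}(\bbG_k)\mathsf{K}(\bbG_k)}\right)D(\bx^k,\phi_k)^2=c_k^2\,D(\bx^k,\phi_k)^2.\]
Because $\hat{z}^k=\hat{x}^k$, the left-hand side equals $\|\bz^k-\hat{\bz}^k\|_{\phi_{k+1}}^2$; taking square roots and combining with the triangle inequality yields the claimed bound. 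I do not anticipate a major obstacle: the only slightly subtle point is recognizing the centering identity $\hat{z}^k=\hat{x}^k$, which is precisely what allows Lemma~\ref{lem-basic-xcontract} to be applied with $u=\hat{x}^k$ and to align cleanly with the $y$-deviation term produced by the triangle inequality.
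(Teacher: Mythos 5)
Your proposal is correct and follows essentially the same route as the paper's proof: the same compact decomposition $\bx^{k+1}-\hat\bx^{k+1}=(\bz^k-\hat\bx^k)-\a(\by^k-\mathbf{v}^k)$, the triangle inequality in the $\phi_{k+1}$-weighted norm, and an application of Lemma~\ref{lem-basic-xcontract} with $\pi=\phi_{k+1}$, $\phi=\phi_k$, $u=\hat x^k$. Your ``centering identity'' $\hat z^k=\hat x^k$ is exactly the computation the paper performs in establishing Proposition~\ref{prop-waverx}(a), so the two arguments coincide.
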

	\begin{proof}
% 	By Proposition~\ref{prop-waverx}(a), for the weighted averages we have
% 	\[\hat x^{k+1}=\hat x^k -\a \sum_{j=1}^n[\phi_{k+1}]_j y_j^k\qquad\hbox{for all }k\ge0.\]
% 	Next, we define $\hat \bx^{k+1}=(\hat x^{k+1},\ldots, \hat x^{k+1})$ and, similarly, 	we define $\hat \bx^{k}=(\hat x^k,\ldots,\hat x^k)$ and 
	We define ${\bf v}^k=(\sum_{j=1}^n[\phi_{k+1}]_j y_j^k,\ldots,\sum_{j=1}^n[\phi_{k+1}]_j y_j^k)$,
	for which we can write the relation for the weighted averages in Proposition~\ref{prop-waverx}(a) in compact form, as follows
	\[\hat \bx^{k+1}=\hat \bx^{k} -\a {\bf v}^k.\]
% 	Recall the compact representation of $x$-iterate process (see~\eqref{eq-xcomp}),
% 	\[\bx^{k+1} =\bz^k - \alpha \by^k\qquad\hbox{for all $k\ge 0$}.\]
	Upon subtracting the preceding relation and 
	the compact representation of $x$-iterate process in~\eqref{eq-xcomp}, we obtain
	\[\bx^{k+1} - {\bf \hat x}^{k+1} = \bz^k - \hat \bx^{k} - \alpha \left(\by^k - {\bf v}^k\right).\]
	Taking the $\phi_{k+1}$-norm on both sides of the preceding relation and using the triangle inequality
	and the positive scaling property of a norm, we obtain
	\[\|\bx^{k+1}-\hat \bx^{k}\|_{\phi_{k+1}}
	=\|\bz^k - \hat \bx^{k} -\alpha \left(\by^k- {\bf v}^k\right)\|_{\phi_{k+1}}
	\le \|\bz^k - \hat \bx^{k}\|_{\phi_{k+1}} + \alpha \|\by^k- {\bf v}^k\|_{\phi_{k+1}}.\]
	The left hand side of the preceding relation corresponds to 
	the dispersion $D(\bx^{k+1},\phi_{k+1})$ (see~\eqref{eq-disp-norm}). 
	The terms on the right hand side we write explicitly in terms of the vector components with $z_i^k=\sum_{j=1}^n [A_k]_{ij} x_j^k$ (see~\eqref{eq-xandz}),
	and obtain 
	\begin{align}\label{eq-11}
	D(\bx^{k+1}\!,\phi_{k+1})
	\le \!\sqrt{\sum_{i=1}^n [\phi_{k+1}]_i \|z_i^k -\hat x^k\|^2} 
	+\a 
	\sqrt{\sum_{i=1}^n [\phi_{k+1}]_i \left\|y_i^k - \!\sum_{j=1}^n[\phi_{k+1}]_j y_j^k\right\|^2}.
	\end{align}
	
	Next, we note that the vectors $z_i^k$, $i\in[n]$, 
	satisfy Lemma~\ref{lem-basic-xcontract}, with $A=A_k$, and 
	$x_i=x_i^k$ for all $i\in[n]$. Moreover, since we have $\phi_{k+1}^{\T}A_k=\phi_k^{\T}$
	by Lemma~\ref{lem-amatrices}, Lemma~\ref{lem-basic-xcontract} applies with 
	$\pi=\phi_{k+1}$, $\phi=\phi_k$ and $\hat x_\phi=\hat x^k$, which yields
	\begin{equation}\label{eq-zxhat}
	\sum_{i=1}^n [\phi_{k+1}]_i\|z_i^k - \hat x^k\|^2 
	\le \left(1- \frac{\min(\phi_{k+1})\, a^2}
	{\max^2(\phi_k)\,\mathsf{D}(\bbG_k)\mathsf{K}(\bbG_k)}\right)
	\sum_{j=1}^n [\phi_k]_j \|x_j^k - \hat x^k\|^2, 
	\end{equation}
	where we use $\min(A_k^+)\ge a$ (see Assumption~\ref{asm-amatrices}).
	Therefore,
	\begin{equation}\label{eq-sqrt}
	\sqrt{\sum_{i=1}^n [\phi_{k+1}]_i \|z_i^k -\hat x^k\|^2}
	\le c_k
	\sqrt{\sum_{j=1}^n [\phi_k]_j \|x_j^k - \hat x^k\|^2},\end{equation}
	where 
	$c_k=\sqrt{1- \frac{\min(\phi_{k+1} )\,a^2}
	{\max^2(\phi_k)\,\mathsf{D}(\bbG_k)\mathsf{K}(\bbG_k)}}.$
	
	By recognizing the term on the right hand side of~\eqref{eq-sqrt} 
	corresponds to $D(\bx^k,\phi_k)$,
	and by combining estimate~\eqref{eq-sqrt} with~\eqref{eq-11}, we obtain the desired relation.
% 	\begin{align*}
% 	D(\bx^{k+1},\phi_{k+1})
% 	\le 	c_k D(\bx^k,\phi_k)
% 	+ \a \sqrt{\sum_{i=1}^n [\phi_{k+1}]_i \left\|y_i^k - \sum_{j=1}^n[\phi_{k+1}]_j y_j^k\right\|^2},
% 	\end{align*}
% 	which is the desired relation. 
	%NOTE c_k\le 0 implied by the above (both sides non-negative)
	\end{proof}
	
	We conclude this section with a result establishing an estimate for $\|\bx^{k+1}-\bx^k\|$
	and $\left\|\sum_{i=1}^n y_i^k\right\|$,
	which will be soon used in the analysis of the behavior of the $y$-iterates.
	
	\begin{lemma}\label{lem-xdiff}
% 	Let the graph sequence $\{\bbG_k\}$ satisfy Assumption~\ref{asm-graphs},
% 	and let $\{A_k\}$ be a matrix sequence satisfying Assumption~\ref{asm-amatrices}.
    Let Assumption~\ref{asm-graphs} and Assumption~\ref{asm-amatrices} hold.
	Then, for all $k\ge0$,
% 	for the iterates of the method~\eqref{eq-met} 
    we have 
	\begin{align}\label{eq-xdiff}
	\left\|\bx^{k+1}-\bx^k\right\|
	\le \left(c_k\sqrt{\frac{1}{\min(\phi_{k+1})}}+\sqrt{\frac{1}{\min(\phi_k)}} \right) D(\bx^k,\phi_k)+ \alpha \|\by^k\|.
	\end{align}
	Additionally, if Assumption~\ref{asm-bmatrices} and Assumption~\ref{asm-functions} hold.
% 	the matrices $B_k$ are column-stochastic and 
% 	the functions $f_i$ have Lipschitz continuous gradients with the constant $L>0$, 
	Then we have for all $k\ge0$,
	\[\left\|\sum_{i=1}^n y_i^k\right\|
	\le L\sqrt{\frac{n}{\min(\phi_k)}} \left(\|\hat x^k - x^*\| + D(\bx^k,\phi_k)\right).\]
	\end{lemma}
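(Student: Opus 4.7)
The plan is to prove the two bounds separately, both by decomposing the quantity of interest via the triangle inequality around the weighted average $\hat x^k$ and then passing from the unweighted Euclidean norm to the $\phi_k$-weighted norm using inequality~\eqref{eq-norm-anorm}.

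For the first bound, I would start from the compact recursion $\bx^{k+1}=\bz^k-\a\by^k$ in~\eqref{eq-xcomp}, subtract $\bx^k$, and apply the triangle inequality to get
\[\|\bx^{k+1}-\bx^k\|\le \|\bz^k-\hat{\bx}^k\|+\|\bx^k-\hat{\bx}^k\|+\a\|\by^k\|.\]
Each of the first two terms is in standard Euclidean norm, so I would convert them to the appropriate $\phi$-weighted norms using~\eqref{eq-norm-anorm}: $\|\bz^k-\hat\bx^k\|\le\sqrt{1/\min(\phi_{k+1})}\,\|\bz^k-\hat\bx^k\|_{\phi_{k+1}}$ and $\|\bx^k-\hat\bx^k\|\le\sqrt{1/\min(\phi_k)}\,D(\bx^k,\phi_k)$. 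For the first weighted norm, the estimate~\eqref{eq-sqrt} derived inside the proof of Proposition~\ref{prop-xcontract} already gives $\|\bz^k-\hat\bx^k\|_{\phi_{k+1}}\le c_k D(\bx^k,\phi_k)$. Collecting these three pieces yields~\eqref{eq-xdiff}.

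For the second bound, I would use Lemma~\ref{lem-sumgrad} to rewrite $\sum_{i=1}^n y_i^k=\sum_{i=1}^n\nabla f_i(x_i^k)$ and exploit the optimality condition $\sum_{i=1}^n\nabla f_i(x^*)=n\nabla f(x^*)=0$ to write $\sum_{i=1}^n y_i^k=\sum_{i=1}^n\bigl(\nabla f_i(x_i^k)-\nabla f_i(x^*)\bigr)$. Applying the triangle inequality and the $L$-Lipschitz gradient assumption gives
\[\biggl\|\sum_{i=1}^n y_i^k\biggr\|\le L\sum_{i=1}^n\|x_i^k-x^*\|\le L\sum_{i=1}^n\|x_i^k-\hat x^k\|+Ln\|\hat x^k-x^*\|.\]
The sum $\sum_{i=1}^n\|x_i^k-\hat x^k\|$ I would estimate by Cauchy--Schwarz followed by~\eqref{eq-norm-anorm}, obtaining $\sum_{i=1}^n\|x_i^k-\hat x^k\|\le\sqrt{n}\,\|\bx^k-\hat\bx^k\|\le\sqrt{n/\min(\phi_k)}\,D(\bx^k,\phi_k)$.

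The only subtle point is to absorb the coefficient $n$ in front of $\|\hat x^k-x^*\|$ into the target coefficient $\sqrt{n/\min(\phi_k)}$. This uses the observation that $\phi_k$ is a stochastic vector of length $n$, so $\min(\phi_k)\le 1/n$, which gives $n\le\sqrt{n/\min(\phi_k)}$. I expect this step to be the only place that needs a bit of care, since the rest of the argument consists of direct applications of the triangle inequality, Cauchy--Schwarz, and the norm conversion~\eqref{eq-norm-anorm}; no new contraction argument is required.
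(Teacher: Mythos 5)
Your proof is correct. The first bound follows the paper's argument exactly: decompose around $\hat\bx^k$ via the triangle inequality, convert to weighted norms with~\eqref{eq-norm-anorm}, and invoke the contraction~\eqref{eq-zxhat} for $\|\bz^k-\hat\bx^k\|_{\phi_{k+1}}$. For the second bound you diverge slightly from the paper in one step: after reaching $L\sum_i\|x_i^k-x^*\|\le L\sqrt{n}\,\|\bx^k-\bx^*\|$, the paper keeps the combined quantity, passes to $\|\bx^k-\bx^*\|_{\phi_k}$ via~\eqref{eq-norm-anorm}, and then splits it exactly using the weighted Pythagorean identity~\eqref{eq-gen-aver} with $u=x^*$, which gives $\|\bx^k-\bx^*\|_{\phi_k}\le\|\hat x^k-x^*\|+D(\bx^k,\phi_k)$ with no loss. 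You instead split termwise by the triangle inequality before any norm conversion, which forces you to absorb the factor $n$ in front of $\|\hat x^k-x^*\|$ using the observation $\min(\phi_k)\le 1/n$ (valid, since $\phi_k$ is a stochastic vector of length $n$). Both routes land on the same constant $L\sqrt{n/\min(\phi_k)}$; the paper's version is marginally tighter in spirit because the split is an exact orthogonal decomposition rather than a triangle inequality, but for the stated bound the two are equivalent and your extra observation is the only additional ingredient needed.
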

	\begin{proof}
	Adding and subtracting $\hat \bx^{k}=(\hat x^k,\ldots,\hat x^k)$, we have
	\[\left\|\bx^{k+1}-\bx^k\right\|=\left\|\bx^{k+1} - {\hat \bx}^k +{\hat \bx}^k-\bx^k\right\|
	\le \left\|\bz^k-{\hat \bx}^k\right\|+\left\|\bx^k - {\hat \bx}^k\right\| + \alpha \|\by^k\|,\]
	where the last inequality follows from the compact representation of $x$-iterate process (see~\eqref{eq-xcomp}) and the triangle inequality.
	By the relation for norms in~\eqref{eq-norm-anorm},
	it follows that 
	\begin{align*}
	\left\|\bx^{k+1}-\bx^k\right\|
	\le \sqrt{\frac{1}{\min(\phi_{k+1})}} \, \left\|\bz^k-{\hat \bx}^k\right\|_{\phi_{k+1}}
	+\sqrt{\frac{1}{\min(\phi_k)}} \left\|\bx^k - {\hat \bx}^k\right\|_{\phi_k}+ \alpha \|\by^k\|.\end{align*}
	We notice that by the relation in \eqref{eq-zxhat}, we have
	$\displaystyle \left\|\bz^k-{\hat \bx}^k\right\|_{\phi_{k+1}} \le c_k \left\|\bx^k - {\hat \bx}^k\right\|_{\phi_k}$. Thus, we obtain the first relation in \eqref{eq-xdiff} upon using the definition of $D(\bx^k,\phi_k)$ in~\eqref{eq-def-disp}.
% 	\begin{align*}
% 	\left\|\bx^{k+1}-\bx^k\right\|
% 	\le \left(c_k\sqrt{\frac{1}{\min(\phi_{k+1})}}+\sqrt{\frac{1}{\min(\phi_k)}} \right) D(\bx^k,\phi_k)+ \alpha \|\by^k\|,
% 	\end{align*}
% 	which is the desired relation.
	
	Now, we consider $\left\|\sum_{i=1}^n y_i^k\right\|$.
	By Lemma~\ref{lem-sumgrad}, we have 
% 	$\sum_{i=1}^n y_i^k=\sum_{i=1}^n \nabla f_i(x_i^k)$, implying that 
	\[\left\| \sum_{i=1}^n y_i^k\right\|=\left\|\sum_{i=1}^n \nabla f_i(x_i^k)\right\|
	=\left\|\sum_{i=1}^n\left(\nabla f_i(x_i^k) -\nabla f_i(x^*)\right) \right\|,\]
	where we use the fact that $\sum_{i=1}^n\nabla f_i(x^*)=0$, which holds since $x^*$ 
	is the solution to problem~\eqref{eq-problem}.
	Therefore, by using the assumption that each $f_i$ has Lipschitz continuous gradients 
	with a Lipschitz constant $L>0$, we obtain
	\[\left\| \sum_{i=1}^n y_i^k\right\|
	\le \sum_{i=1}^n\left\| \nabla f_i(x_i^k)-\nabla f_i(x^*)\right\|
	\le L\sum_{i=1}^n\| x_i^k -x^*\|= L\sqrt{n}\|\bx^k -\bx^*\|.\]
	Using the relation for norms in~\eqref{eq-norm-anorm}, we further obtain
	\begin{align}\label{eq-sumy1}
	\left\| \sum_{i=1}^n y_i^k\right\|
	\le L\sqrt{\frac{n}{\min(\phi_k)}}\|\bx^k -\bx^*\|_{\phi_k}.
	\end{align}
	Applying the relation~\eqref{eq-gen-aver} with $u_i=x_i^k$, $\g_i=[\phi_k]_i$ for all $i$, and $u=x^*$ yields
	\[\sum_{i=1}^n [\phi_k]_i\|x_i^k-x^*\|^2
	=\|\hat x^k-x^*\|^2 + \sum_{i=1}^n [\phi_k]_i\|x_i^k-\hat x^k\|^2,\]
	where $\hat x^k=\sum_{\ell=1}^n[\phi_k]_\ell x_\ell^k$.
	Hence,
	\[\left\|\bx^k - \bx^*\right\|_{\phi_k}
	\!=\!\sqrt{\|\hat x^k-x^*\|^2 + \sum_{i=1}^n [\phi_k]_i\|x_i^k-\hat x^k\|^2}
	\le \|\hat x^k-x^*\|+\sqrt{\sum_{i=1}^n [\phi_k]_i\|x_i^k-\hat x^k\|^2},\]
	where the inequality in the preceding relation follows from $\sqrt{a+b}\le \sqrt{a}+\sqrt{b}$,
	which is valid for any $a,b\ge0$.
	Therefore, using the definition of $D(\bx^k,\phi_k)$ in~\eqref{eq-def-disp},
	we have
	\begin{equation}\label{eq-xrel2}
	\left\|\bx^k - \bx^*\right\|_{\phi_k}
	\le  \|\hat x^k-x^*\|+D(\bx^k,\phi_k),
	\end{equation}
	from which the second desired relation follows by using~\eqref{eq-sumy1} and \eqref{eq-xrel2}.
	\end{proof}

	\subsection{Weighted Dispersion of Scaled Agents' $y$-variables}\label{ssec-yvar}
	%-------------------------------------------------------------------------------------------------------
 	In this section, we analyze the behavior of the directions $y_i^k$ generated by the method in~\eqref{eq-met}.
% 	The analysis will make use of some weighted norms of scaled directions  $y_i^k$, where the scalings are time-varying and defined by a stochastic vector sequence associated with the matrix sequence $\{B_k\}$. 
	A preliminary result that establishes a basic relation corresponding to a 
	column-stochastic matrix $B$ is given in the following lemma. 
	
	\begin{lemma}\label{lem-basic-ycontract}
	Let $\bbG=([n],{\cal E})$ be a strongly connected directed graph,  and
	let $B$ be an $n\times n$ column-stochastic matrix that is compatible with 
	the graph and has positive diagonal entries, i.e.,
	$B_{ij}>0$ when $j=i$ and $(j,i)\in {\cal E}$, and $B_{ij}=0$ otherwise. 
	Also, let $\nu$  be a stochastic vector with all entries positive, i.e., $\nu_i>0$ for all $i\in[n]$,
	and let the vector $\pi$ be given by
	$\pi=B\nu$.
	Let $y_1,\ldots,y_n\in\re^p$
	be a given collection of vectors, and consider the vectors $w_i=\sum_{j=1}^n B_{ij} y_j$ for all $i\in[n]$. Then, we have
	\begin{align*}
	\sqrt{\sum_{i=1}^n\pi_i \left\|\frac{w_i}{\pi_i} - \sum_{\ell=1}^m y_\ell\right\|^2}
	\le\tau\,
	\sqrt{\sum_{i=1}^n \nu_i \left\|\frac{y_i}{\nu_i} - \sum_{\ell=1}^n y_\ell\right\|^2},
	\end{align*}	
	where the scalar $\tau\in(0,1)$ is given by
	$\tau=\sqrt{1 -   \frac{\min^2(\nu)\,\left(\min(B^+)\right)^2}
	{\max^2(\nu) \max(\pi)\, \mathsf{D}(\bbG)\mathsf{K}(\bbG)}},$
	where $\mathsf{D}(\bbG)$ and $\mathsf{K}(\bbG)$ are the diameter and the maximal edge-utility of the graph $\bbG$, respectively.	
	\end{lemma}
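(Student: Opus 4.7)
The plan is to reduce the column-stochastic problem to a row-stochastic one by rescaling, and then to apply the algebraic expansion~\eqref{eq-normofsum} together with the graph-combinatorial bound from Lemma~\ref{lem-xdiverse}. Specifically, I would introduce the rescaled vectors $\tilde y_i=y_i/\nu_i$ and $\tilde w_i=w_i/\pi_i$, and the matrix $P$ with entries $P_{ij}=B_{ij}\nu_j/\pi_i$. A direct check then shows that $P$ is well-defined (since $\pi_i\ge B_{ii}\nu_i>0$), row-stochastic, compatible with the same graph $\bbG$ as $B$ with positive diagonal, and satisfies $\tilde w_i=\sum_j P_{ij}\tilde y_j$ as well as $\pi^\T P=\nu^\T$ (the latter identity using the column-stochasticity of $B$). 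Moreover, because $\sum_i B_{ij}=1$, the vector $Y:=\sum_{\ell=1}^n y_\ell$ equals both $\sum_j\nu_j\tilde y_j$ and $\sum_i\pi_i\tilde w_i$, so $Y$ plays the role of the weighted mean on both sides of the target inequality.

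Applying~\eqref{eq-normofsum} to the convex combination $\tilde w_i-Y=\sum_j P_{ij}(\tilde y_j-Y)$ for each $i$, multiplying by $\pi_i$, summing, and using the identities $\sum_i\pi_i P_{ij}=\nu_j$ and $\pi_i P_{ij}P_{il}=B_{ij}B_{il}\nu_j\nu_l/\pi_i$, I obtain the key decomposition
\[
\sum_i\pi_i\|\tilde w_i-Y\|^2=\sum_j\nu_j\|\tilde y_j-Y\|^2-\tfrac12\sum_{i,j,l}\frac{B_{ij}B_{il}\nu_j\nu_l}{\pi_i}\|\tilde y_j-\tilde y_l\|^2.
\]
The contraction factor $\tau^2$ will come entirely from a lower bound on the subtracted ``dispersion'' term in terms of $\sum_j\nu_j\|\tilde y_j-Y\|^2$.

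To build such a lower bound I exploit the self-loops of $B$ to connect the triple sum to the edge set $\mathcal E$. For each directed edge $(m,n)\in\mathcal E$ with $m\neq n$, the two distinct ordered triples $(i,j,l)=(n,m,n)$ and $(i,j,l)=(n,n,m)$ both satisfy $B_{ij}B_{il}\ge\min^2(B^+)$ (using $B_{nm}>0$ since $(m,n)\in\mathcal E$, together with the self-loop $B_{nn}>0$), and each contributes at least $\tfrac{\min^2(B^+)\min^2(\nu)}{\max(\pi)}\|\tilde y_m-\tilde y_n\|^2$. These two triples together absorb the prefactor $\tfrac12$, yielding
\[
\tfrac12\sum_{i,j,l}\frac{B_{ij}B_{il}\nu_j\nu_l}{\pi_i}\|\tilde y_j-\tilde y_l\|^2\ \ge\ \frac{\min^2(B^+)\min^2(\nu)}{\max(\pi)}\sum_{(m,n)\in\mathcal E}\|\tilde y_m-\tilde y_n\|^2.
\]
Lemma~\ref{lem-xdiverse} then bounds the edge sum below by $\frac{1}{\mathsf D(\bbG)\mathsf K(\bbG)}\sum_{m<n}\|\tilde y_m-\tilde y_n\|^2$, while~\eqref{eq-aver-disp} applied with $\gamma_i=\nu_i$ and $u_i=\tilde y_i$, combined with $\nu_m\nu_n\le\max^2(\nu)$, gives $\sum_m\nu_m\|\tilde y_m-Y\|^2\le\max^2(\nu)\sum_{m<n}\|\tilde y_m-\tilde y_n\|^2$. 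Chaining these inequalities and taking a square root produces the stated $\tau$.

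The main obstacle is obtaining the sharp constant $1/\max(\pi)$ in the denominator of $1-\tau^2$. A direct reduction via Lemma~\ref{lem-basic-xcontract} applied to the row-stochastic $P$ only yields the weaker factor $\min(\pi)/\max^2(\pi)$, because the pointwise bound $\min(P^+)\ge\min(B^+)\min(\nu)/\max(\pi)$ loses information when plugged into that black-box estimate. It is the hand expansion via~\eqref{eq-normofsum} together with the two-triple-per-edge accounting described above that restores the tighter constant, and that accounting is available only because each $B_i$-column has a positive diagonal entry.
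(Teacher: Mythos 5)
Your proof is correct, and while it follows the same overall strategy as the paper's --- expand the weighted second moment via~\eqref{eq-normofsum}, isolate a subtracted pairwise--dispersion term, lower--bound that term using the positive diagonal of $B$ and the edges of $\bbG$, then invoke Lemma~\ref{lem-xdiverse} and~\eqref{eq-aver-disp} --- it differs in two worthwhile ways. First, your change of variables to $\tilde y_i=y_i/\nu_i$, $\tilde w_i=w_i/\pi_i$ and the row-stochastic matrix $P_{ij}=B_{ij}\nu_j/\pi_i$ with $\pi^{\T}P=\nu^{\T}$ lets you apply~\eqref{eq-normofsum} directly to the centered vectors $\tilde y_j-Y$ and land immediately on the identity the paper reaches as~\eqref{eq-sum2}; the paper instead expands $\|w_i\|^2$ uncentred and must pass through~\eqref{eq-gen-aver} twice and cancel $\|\sum_\ell w_\ell\|^2=\|\sum_\ell y_\ell\|^2$. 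Second, and more substantively, your edge accounting is sharper: for each edge $(m,n)\in\mathcal{E}$ you harvest the two distinct triples $(i,j,l)=(n,m,n)$ and $(n,n,m)$, which absorbs the prefactor $\tfrac12$ in front of the dispersion term, whereas the paper keeps only the single term $i=j$ for each ordered pair $(j,\ell)$ with $\ell\in\Nin{j}$ (the bound $\sum_i\pi_i^{-1}B_{ij}B_{i\ell}\ge\pi_j^{-1}B_{jj}B_{j\ell}$), i.e.\ one triple per edge. As a consequence, the chain of inequalities in the paper establishes only that the full triple sum is at least $\kappa\sum_i\nu_i\|y_i/\nu_i-\sum_\ell y_\ell\|^2$ with $\kappa=\min^2(\nu)(\min(B^+))^2/(\max^2(\nu)\max(\pi)\mathsf{D}(\bbG)\mathsf{K}(\bbG))$, which after the factor $\tfrac12$ in~\eqref{eq-sum2} yields $\tau=\sqrt{1-\kappa/2}$ rather than the stated $\sqrt{1-\kappa}$; your two-triple version recovers the constant exactly as stated in the lemma. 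Your closing remark is also on point: reducing to $P$ and black-boxing Lemma~\ref{lem-basic-xcontract} would degrade the constant to $\min(\pi)/\max^2(\pi)$, so the hand expansion is genuinely needed.
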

	\begin{proof}
	For any $i\in[n]$, by the definition of $w_i$, we have
	\[\|w_i \|^2 =\left\| \sum_{j=1}^n B_{ij}y_j \right\|^2
	=\left\|\sum_{j=1}^n B_{ij}\nu_j \frac{y_j}{\nu_j}\right\|^2.\]
% 	where we use the fact the vector $\nu$ has all entries positive.
	We further expand the squared norm term by using Lemma~\ref{eq-normofsum}
	with $\g_j=B_{ij}\nu_j$ and $u_j=y_j/ \nu_j$ for all $j\in[n]$.
	Hence, we obtain
	\begin{align*}
	\|w_i \|^2 
	= \left(\sum_{\ell=1}^n B_{i\ell}\nu_\ell\right) \sum_{j=1}^n B_{ij}\nu_j\left\|\frac{y_j}{\nu_j}\right\|^2 
	-\frac{1}{2} \sum_{j=1}^n \sum_{\ell=1}^n B_{ij} \nu_jB_{i\ell}\nu_\ell
	 \left\|\frac{y_j}{\nu_j} - \frac{y_\ell}{\nu_\ell} \right\|^2.
	\end{align*}
	Recalling the definition of $\pi$, i.e., $\pi=B\nu$,
	we have $\pi_i=\sum_{\ell=1}^n B_{i\ell}\nu_\ell$,
	so that we have
	
	\begin{align*}
	\|w_i \|^2 
	= \pi_i \sum_{j=1}^n B_{ij}\nu_j\left\|\frac{y_j}{\nu_j}\right\|^2 
	-\frac{1}{2} \sum_{j=1}^n \sum_{\ell=1}^n B_{ij} \nu_jB_{i\ell}\nu_\ell
	 \left\|\frac{y_j}{\nu_j} - \frac{y_\ell}{\nu_\ell} \right\|^2.
	\end{align*}
	Since the matrix $B$ is nonnegative and compatible with a strongly connected graph $\bbG$,
	and since the vector $\nu$ has all positive entries,
	it follows that the vector $\pi$ also has all entries positive. 
	By dividing with $\pi_i$ both sides of the preceding relation, 
	and then by summing over all $i$, we obtain
	\begin{align*}
	\sum_{i=1}^n\pi_i^{-1}\|w_i \|^2 
	=\sum_{i=1}^n \sum_{j=1}^n B_{ij}\nu_j\left\|\frac{y_j}{\nu_j}\right\|^2 
	- \frac{1}{2} \sum_{i=1}^n\pi_i^{-1}\sum_{j=1}^n \sum_{\ell=1}^n 
	B_{ij} \nu_jB_{i\ell}\nu_\ell
	 \left\|\frac{y_j}{\nu_j} - \frac{y_\ell}{\nu_\ell} \right\|^2.
	\end{align*}
	For the first term on the right hand side of the preceding inequality,
	we have that
	\[\sum_{i=1}^n \sum_{j=1}^n B_{ij}\nu_j\left\|\frac{y_j}{\nu_j}\right\|^2
	= \sum_{i=1}^n \sum_{j=1}^n B_{ij}\nu_j^{-1}\|y_j\|^2
	=\sum_{j=1}^n \left(\sum_{i=1}^n B_{ij}\right)\nu_j^{-1}\|y_j\|^2 
	=\sum_{j=1}^n\nu_j^{-1}\|y_j\|^2,\]
	where the last equality follows since the matrix $B$ is column-stochastic.
	Therefore, 
	\begin{equation}\label{eq-sum1}
	\sum_{i=1}^n\pi_i^{-1}\|w_i \|^2 
	=\sum_{j=1}^n \nu_j^{-1}\|y_j\|^2 
	- \frac{1}{2} \sum_{i=1}^n\pi_i^{-1}\sum_{j=1}^n \sum_{\ell=1}^n 
	B_{ij} \nu_jB_{i\ell}\nu_\ell
	 \left\|\frac{y_j}{\nu_j} - \frac{y_\ell}{\nu_\ell} \right\|^2.
	\end{equation}	
	We note that the vector $\pi$ is stochastic since $B$ is column stochastic and $\nu$ is a stochastic vector.
	Hence, 
	\[\sum_{i=1}^n\pi_i^{-1}\|w_i \|^2
	=\sum_{i=1}^n\pi_i\left\|\frac{w_i }{\pi_i}\right\|^2
	=\sum_{i=1}^n\pi_i \left\|\frac{w_i}{\pi_i} - \sum_{\ell=1}^m w_\ell\right\|^2 
	+ \left\|\sum_{\ell=1}^n w_\ell\right\|^2,\]
	where the last relation is obtained by using relation~\eqref{eq-gen-aver} with $u=0$, $u_i=w_i/\pi_i$, and 
	$\g_i=\pi_i$. Using a similar line of arguments, since $\nu$ is a stochastic vector, we obtain
	\[\sum_{j=1}^n \nu_j^{-1}\|y_j\|^2 
	=\sum_{j=1}^n\nu_j \left\|\frac{y_j}{\nu_j} - \sum_{\ell=1}^m y_\ell\right\|^2 
	+ \left\|\sum_{\ell=1}^n y_\ell\right\|^2.\]
	Since $B$ is column-stochastic, we also have $\sum_{\ell=1}^n w_\ell=\sum_{\ell=1}^n y_\ell$,
	so that by combining the  preceding two relations with~\eqref{eq-sum1},
	we have that
	\begin{equation}\label{eq-sum2}
	\!\!\sum_{i=1}^n\pi_i \left\|\frac{w_i}{\pi_i} \!-\! \sum_{\ell=1}^m y_\ell\right\|^2 
	\!\!\!\!=\!\sum_{j=1}^n\nu_j \left\|\frac{y_j}{\nu_j} \!-\!\! \sum_{\ell=1}^m y_\ell\right\|^2 
	\!\!\!\!- \frac{1}{2} \!\sum_{i=1}^n\pi_i^{-1}\!\sum_{j=1}^n \sum_{\ell=1}^n 
	B_{ij} \nu_jB_{i\ell}\nu_\ell
	 \left\|\frac{y_j}{\nu_j} \!-\! \frac{y_\ell}{\nu_\ell} \right\|^2\!\!\!.
	\end{equation}	
	Next, we estimate the second term on the right hand side of~\eqref{eq-sum2}. 
	By exchanging the order of the summation 
	so that the summation over $i$ is the last in the order,
	we obtain
	\begin{align*}
	\sum_{i=1}^n \pi_i^{-1}\sum_{j=1}^n \sum_{\ell=1}^n 
	B_{ij} \nu_jB_{i\ell}\nu_\ell
	 \left\|\frac{y_j}{\nu_j} - \frac{y_\ell}{\nu_\ell} \right\|^2
	&=\sum_{j=1}^n \sum_{\ell=1}^n  \nu_j\nu_\ell 
	\left\|\frac{y_j}{\nu_j} - \frac{y_\ell}{\nu_\ell} \right\|^2
	\left(\sum_{i=1}^n \pi_i^{-1}B_{ij}B_{i\ell}\right)\cr
	&\ge \sum_{j=1}^n \sum_{\ell\in\mathcal{N}_j^{\rm in} } 
	 \nu_j\nu_\ell \left\|\frac{y_j}{\nu_j} - \frac{y_\ell}{\nu_\ell} \right\|^2
	\left(\sum_{i=1}^n \pi_i ^{-1}B_{ij} B_{i\ell}\right).
	\end{align*}
	The graph $\bbG$ is strongly connected implying that 
	every node $j$ must have a nonempty in-neighbor set 
	$\mathcal{N}_j^{\rm in}$. 
	Moreover, by assumption we have that $B_{jj}>0$ every $j\in[n]$
	and $B_{j\ell}>0$ for  all $\ell\in\mathcal{N}_j^{\rm in} $. Therefore, it follows that
	\begin{align*}
	\sum_{i=1}^n \pi_i^{-1}B_{ij} B_{i\ell} 
	\ge \pi_j^{-1}B_{jj} B_{j\ell} 
	\ge \pi_j^{-1} \left(\min_{ij: B_{ij}>0} B_{ij}\right)^2
	\ge \left(\min_{j\in[n]}\pi_j^{-1} \right)\left(\min_{ij: B_{ij}>0} B_{ij}\right)^2. 
	\end{align*}
	Using the notation $\min(B^+)=\min_{ij: B_{ij}>0} B_{ij}$, we have
	\begin{align}\label{eq-lowerby1}
	\sum_{i=1}^n \pi_i^{-1}\sum_{j=1}^n \sum_{\ell=1}^n 
	B_{ij} \nu_jB_{i\ell}\nu_\ell
	 \left\|\frac{y_j}{\nu_j} - \frac{y_\ell}{\nu_\ell} \right\|^2
	&\ge \frac{ \left(\min(B^+)\right)^2}{\max(\pi) }
	\sum_{j=1}^n \sum_{\ell\in\mathcal{N}_j^{\rm in} }
	\nu_j\nu_\ell \left\|\frac{y_j}{\nu_j} - \frac{y_\ell}{\nu_\ell} \right\|^2\cr
	&\ge \frac{\min^2(\nu)\left(\min(B^+)\right)^2}{ \max(\pi) } 
	\sum_{(\ell,j)\in\mathcal{E}}  \left\|\frac{y_j}{\nu_j} - \frac{y_\ell}{\nu_\ell} \right\|^2\!\!.
	\end{align}
	We bound the sum $\sum_{(\ell,j)\in\mathcal{E}} \|y_j - y_\ell \|^2$
	from below by employing Lemma~\ref{lem-xdiverse}. 
	By assumption the graph $\bbG=([n],{\cal E})$ is strongly connected, by Lemma~\ref{lem-xdiverse} it follows that 
	\[\sum_{(j,\ell)\in\mathcal{E}} \left\|\frac{y_j}{\nu_j} - \frac{y_\ell}{\nu_\ell} \right\|^2
	\!\!\!\ge \frac{1}{\mathsf{D}(\bbG)\mathsf{K}(\bbG)}\sum_{j=1}^n\sum_{\ell=j+1}^n
	\!\left\|\frac{y_j}{\nu_j} - \frac{y_\ell}{\nu_\ell} \right\|^2
	\!\!\!=\! \frac{1}{\mathsf{D}(\bbG)\mathsf{K}(\bbG)}\,
	\frac{1}{2}\sum_{j=1}^n\sum_{\ell=j}^n\left\|\frac{y_j}{\nu_j} - \frac{y_\ell}{\nu_\ell} \right\|^2	\!\!\!,\]
	where $\mathsf{D}(\bbG)$ is the diameter of the graph $\bbG$, and $\mathsf{K}(\bbG)$ is the maximal edge-utility in the graph $\bbG$. 
	The preceding relation and relation~\eqref{eq-lowerby1} yield
	\begin{align}\label{eq-lowerby2}
	\sum_{i=1}^n \pi_i^{-1}\sum_{j=1}^n \sum_{\ell=1}^n B_{ij}\nu_j &B_{i\ell}\nu_\ell
	\left\|\frac{y_j}{\nu_j} - \frac{y_\ell}{\nu_\ell} \right\|^2 \ge \frac{\min^2(\nu) \,\left(\min(B^+)\right)^2}{\max(\pi)\, \mathsf{D}(\bbG)\mathsf{K}(\bbG)}\,
	\frac{1}{2}\sum_{j=1}^n\sum_{\ell=1}^n\left\|\frac{y_j}{\nu_j} - \frac{y_\ell}{\nu_\ell} \right\|^2 \nonumber \\
	&\ge 
	\frac{\min^2(\nu)\,\left(\min(B^+)\right)^2}{\max^2(\nu) \max(\pi)\, \mathsf{D}(\bbG)\mathsf{K}(\bbG)}\,
	\frac{1}{2}
	\sum_{j=1}^n\sum_{\ell=1}^n\nu_j\nu_\ell\left\|\frac{y_j}{\nu_j} - \frac{y_\ell}{\nu_\ell} \right\|^2.
	\end{align}	
	To express the last term, since $\la\nu,\1\ra=1$, we 
% 	use relation for the weighted dispersion in~\eqref{eq-aver-disp}, which can be applied since $\la\nu,\1\ra=1$. Specifically, 
    we apply relation~\eqref{eq-aver-disp}
	with $\g_i=\nu_i$ and $u_i=y_i/\nu_i$ for all $i\in[n]$, and thus obtain
	\begin{align*}
	\frac{1}{2}\sum_{j=1}^n\sum_{\ell=1}^n\nu_j\nu_\ell
	\left\|\frac{y_j}{\nu_j} - \frac{y_\ell}{\nu_\ell} \right\|^2
	&=\sum_{i=1}^n \nu_i \left\|\frac{y_i}{\nu_i} - \sum_{\ell=1}^n y_\ell\right\|^2.\end{align*}
	By combining the preceding relation with inequality~\eqref{eq-lowerby2}, 
	and by substituting the resulting lower bound back in~\eqref{eq-sum2},
	we obtain
	\begin{align*}
	\sum_{i=1}^n\pi_i \left\|\frac{w_i}{\pi_i} - \sum_{\ell=1}^m y_\ell\right\|^2
	\le\left(1 -   \frac{\min^2(\nu)\,\left(\min(B^+)\right)^2}
	{\max^2(\nu) \max(\pi)\, \mathsf{D}(\bbG)\mathsf{K}(\bbG)}\right)
	\sum_{i=1}^n \nu_i \left\|\frac{y_i}{\nu_i} - \sum_{\ell=1}^n y_\ell\right\|^2.
	\end{align*}	
	which yields the desired relation after taking the square roots.
	\end{proof}
	
	The third quantity that we use to capture the behavior of the $AB$/Push-Pull method is the
	$\pi_k$-weighted dispersion of the scaled vectors 
	$y_1^k/[\pi_k]_1,\ldots,y_n^k/[\pi_k]_n$, i.e,
% 	defined using the stochastic vectors $\pi_k$ (see~\eqref{eq-pik}). 
% 	Under the assumptions
% 	of Lemma~\ref{lem-bmatrices}, these vectors have positive entries, and we consider the
% 	following quantity
	\begin{equation}\label{eq-scaledy}
	S(\by^k,\pi_k)=\sqrt{\sum_{i=1}^n [\pi_k]_i \left\|\frac{y_i^k}{[\pi_k]_i} - \sum_{\ell=1}^n y_\ell^k\right\|^2},
	\end{equation}
	where $\pi_k$ is the stochastic vector defined in~\eqref{eq-pik}, $y_i^k$ are the directions used in
	method~\eqref{eq-met} at time $k$, and $\by^k=(y_1^k,\ldots,y_n^k)$. We note that $S(\by^k,\pi_k)$ can also be interpreted through the $\pi_k$-induced norm in the Cartesian product space $\re^p\times\cdots\times\re^p$. Specifically, using the definition of the iterates $y_i^{k+1}$ in~\eqref{eq-y}, we express 
	$y_i^{k+1}$ as follows:
	\begin{equation}\label{eq-yandw}
	y_i^{k+1}=w_i^k +\nabla f_i(x_i^{k+1})-\nabla f_i(x_i^k),
	\qquad\hbox{with}\quad w_i^k=\sum_{j=1}^n [B_k]_{ij} y_j^k.\end{equation}
	By defining 
% 	$\by^{k+1}=(y_1^{k+1},\ldots,y_n^{k+1})$ and
	$\bw^k=(w_1^k,\ldots,w_n^k)$ and $\bg^k=(\nabla f_1(x_1^k),\ldots,\nabla f_n(x_n^k))$, we have
	\begin{equation}\label{eq-ycomp}
	\by^{k+1}=\bw^k+ \bg^{k+1}-\bg^k\qquad\hbox{for all }k\ge0.
	\end{equation}
	Viewing $\by^{k+1}$
% 	$\by^{k+1}=(y_1^{k+1},\ldots,y_n^{k+1})$
	as the matrix with columns $y_i^{k+1}$, and similarly $\bw^k$ and $\bg^k$,
	we can write
	\begin{equation}\label{eq-ymatrix}
	\by^{k+1}\diag^{-1}(\pi_{k+1})=\bw^k\diag^{-1}(\pi_{k+1})
	+ (\bg^{k+1}-\bg^k)\diag^{-1}(\pi_{k+1})~~\hbox{for all }k\ge0,
	\end{equation}
	where $\diag(u)$ is the diagonal matrix with the vector $u$ entries on its diagonal.
	With this alternative view of the method, we have
	\begin{equation}\label{eq-s-norm}
	S(\by^k,\pi_k)=\|\by^k\diag^{-1}(\pi_k) - \bs^k\|_{\pi_k}
	~~ \hbox{ with }	\bs^k=(s^k,\ldots,s^k), \ s^k=\sum_{\ell=1}^n y_\ell^k.
	\end{equation}
% 	Using this alternative interpretation of the quantity $S(\by^k,\pi_k)$ and the representation of the $y$-iterate updates, 
	We provide the recursive relation for $S(\by^k,\pi_k)$ 
	in the following proposition.
		
	\begin{proposition}\label{prop-ycontract}
% 	Let the graph sequence $\{\bbG_k\}$ satisfy Assumption~\ref{asm-graphs}.
% 	Let $\{A_k\}$ be a matrix sequence satisfying Assumption~\ref{asm-amatrices} and $\{B_k\}$ be a matrix sequence satisfying Assumption~\ref{asm-bmatrices}. 
% 	Let $\{\pi_k\}$ be the sequence of stochastic vectors satisfying~\eqref{eq-pik}.
% 	Also, let the functions $f_i$ have Lipschitz continuous gradients with a constant $L$, and 
    Let Assumptions~\ref{asm-graphs}-\ref{asm-functions} hold,
    % Consider the iterates produced by the method in~\eqref{eq-met}. Then, for the quantity $S(\by^k,\pi_k)$ defined in~\eqref{eq-scaledy} 
    we have for all $k\ge0$,
	\begin{equation*}
	\|\by^k\|_{\pi_k^{-1}}
	=\sqrt{S^2(\by^k,\pi_k) +\left\|\sum_{\ell=1}^n y_\ell^k\right\|^2}
	\le S(\by^k,\pi_k) +\left\|\sum_{\ell=1}^n y_\ell^k\right\|,\end{equation*}
	\vspace{-0.5cm}
	\begin{align*}
	S(\by^{k+1}\!,\pi_{k+1}) \!\le\! \tau_k S(\by^k\!,\pi_k) \!+\! \alpha L r_k \|\by^k\|\!+\!L r_k\! \left(\!c_k\sqrt{\!\frac{1}{\min(\phi_{k+1})}}\!+\!\sqrt{\!\frac{1}{\min(\phi_k)}} \right) \!D(\bx^k,\phi_k).
	\end{align*}
	Here, the scalars $r_k>0$ and $\tau_k\in(0,1)$ 
	are given by
	\[r_k=\sqrt{n} + \frac{1}{\sqrt{\min(\pi_{k+1})}},
	\qquad
	\tau_k =\sqrt{1 -   \frac{\min^2(\pi_k)\,b^2}
	{\max^2(\pi_k) \max(\pi_{k+1})\, \mathsf{D}(\bbG_k)\mathsf{K}(\bbG_k)} },\]
	where $\phi_k$ and $\pi_k$ are the stochastic vectors associated with the matrices $A_k$ and $B_k$.
% 	respectively.
% 	from Lemma~\ref{lem-amatrices} and $b>0$ is the smallest positive entry of the matrices
% 	$B_k$ from Assumption~\ref{asm-bmatrices}. 
	\end{proposition}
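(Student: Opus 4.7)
The first identity is a direct specialization of the averaging identity~\eqref{eq-gen-aver}. Applying it with $\g_i=[\pi_k]_i$, $u_i=y_i^k/[\pi_k]_i$, and $u=0$ gives
\[
\sum_{i=1}^n \frac{\|y_i^k\|^2}{[\pi_k]_i}=\sum_{i=1}^n [\pi_k]_i\left\|\frac{y_i^k}{[\pi_k]_i}-\sum_{\ell=1}^n y_\ell^k\right\|^2+\left\|\sum_{\ell=1}^n y_\ell^k\right\|^2,
\]
which is exactly $\|\by^k\|_{\pi_k^{-1}}^2=S^2(\by^k,\pi_k)+\|\sum_\ell y_\ell^k\|^2$. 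The displayed inequality follows from $\sqrt{a+b}\le\sqrt a+\sqrt b$ for $a,b\ge0$.

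For the recursion, the plan is to decompose $y_i^{k+1}=w_i^k+\delta_i^k$ with $\delta_i^k=\nabla f_i(x_i^{k+1})-\nabla f_i(x_i^k)$ and $w_i^k=\sum_j [B_k]_{ij}y_j^k$ as in~\eqref{eq-yandw}. Column-stochasticity of $B_k$ gives $\sum_\ell w_\ell^k=\sum_\ell y_\ell^k$, hence $\sum_\ell y_\ell^{k+1}=\sum_\ell w_\ell^k+\Delta^k$ where $\Delta^k=\sum_\ell \delta_\ell^k$. Then
\[
\frac{y_i^{k+1}}{[\pi_{k+1}]_i}-\sum_\ell y_\ell^{k+1}
=\left(\frac{w_i^k}{[\pi_{k+1}]_i}-\sum_\ell w_\ell^k\right)+\left(\frac{\delta_i^k}{[\pi_{k+1}]_i}-\Delta^k\right),
\]
and the triangle inequality for $\|\cdot\|_{\pi_{k+1}}$ splits $S(\by^{k+1},\pi_{k+1})$ into a ``$w$-part'' and a ``$\delta$-part''.

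For the $w$-part, I would invoke Lemma~\ref{lem-basic-ycontract} with $B=B_k$, $\nu=\pi_k$, and $\pi=B_k\pi_k=\pi_{k+1}$ (this is precisely how $\pi_k$ was defined in~\eqref{eq-pik}). Since $\sum_\ell w_\ell^k=\sum_\ell y_\ell^k$, the lemma yields a contraction by $\tau_k$ on top of $S(\by^k,\pi_k)$, with the stated formula for $\tau_k$ coming from $\min(B_k^+)\ge b$ in Assumption~\ref{asm-bmatrices}.

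For the $\delta$-part, split via the triangle inequality:
\[
\sqrt{\sum_i [\pi_{k+1}]_i\left\|\tfrac{\delta_i^k}{[\pi_{k+1}]_i}-\Delta^k\right\|^2}
\le \sqrt{\sum_i \tfrac{\|\delta_i^k\|^2}{[\pi_{k+1}]_i}}+\|\Delta^k\|.
\]
Bound the first term by $\|\bdelta^k\|/\sqrt{\min(\pi_{k+1})}$ and the second, via Cauchy--Schwarz, by $\sqrt n\,\|\bdelta^k\|$, yielding an overall factor $r_k=\sqrt n+1/\sqrt{\min(\pi_{k+1})}$ multiplying $\|\bdelta^k\|$. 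The Lipschitz assumption (Assumption~\ref{asm-functions}) applied coordinatewise gives $\|\bdelta^k\|\le L\|\bx^{k+1}-\bx^k\|$, and finally Lemma~\ref{lem-xdiff} supplies the bound on $\|\bx^{k+1}-\bx^k\|$ in the claimed form $(c_k/\sqrt{\min(\phi_{k+1})}+1/\sqrt{\min(\phi_k)})D(\bx^k,\phi_k)+\a\|\by^k\|$. Summing the two parts yields the desired inequality.

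The main obstacle is the careful bookkeeping at the second step: recognizing that the sum $\sum_\ell y_\ell^{k+1}$ that appears inside $S(\by^{k+1},\pi_{k+1})$ must be rewritten as $\sum_\ell w_\ell^k+\Delta^k$ (which requires the column-stochasticity identity $\sum_\ell w_\ell^k=\sum_\ell y_\ell^k$) so that the $w$-part is exactly in the form required by Lemma~\ref{lem-basic-ycontract}. Once this alignment is made and $\nu=\pi_k$, $\pi=\pi_{k+1}$ is chosen, the rest is a routine combination of triangle inequality, Cauchy--Schwarz, and Lipschitz continuity.
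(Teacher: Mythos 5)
Your proposal is correct and follows essentially the same route as the paper's proof: the same application of~\eqref{eq-gen-aver} for the first identity, the same decomposition $y_i^{k+1}=w_i^k+\delta_i^k$ with Lemma~\ref{lem-basic-ycontract} applied to the $w$-part via $\nu=\pi_k$, $\pi=B_k\pi_k=\pi_{k+1}$, and the same two bounds ($\sqrt{n}$ from the centered sum and $1/\sqrt{\min(\pi_{k+1})}$ from the weighted norm) assembling $r_k$ before invoking Lemma~\ref{lem-xdiff}. The only cosmetic difference is that the paper splits the remainder into the two terms $(\bs^k-\bs^{k+1})$ and $(\bg^{k+1}-\bg^k)\diag^{-1}(\pi_{k+1})$ from the outset, whereas you group them as a single $\delta$-part and separate them afterward by the triangle inequality, which yields identical constants.
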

	\begin{proof}
	Firstly, we note that under given assumptions, by Lemma~\ref{lem-bmatrices} we have that the stochastic vectors $\pi_k$, $k\ge0$, defined in~\eqref{eq-pik}, all have positive entries. 
% 	so that the scaling with $[\pi_k]_i^{-1}$ is well defined for all $i\in[n]$ and $k\ge0$. In particular the $\pi_k^{-1}$-induced norm is well defined.
	Noting that
	\[\|\by^k\|^2_{\pi_k^{-1}}
	=\sum_{i=1}^n \frac{\|y_i^k\|^2}{[\pi_k]_i} 
	=\sum_{i=1}^n [\pi_k]_i\,\left\|\frac{y_i^k}{[\pi_k]_i} \right\|^2,\]
	and using relation~\eqref{eq-gen-aver} for the weighted average of vectors, where $\g_i=[\pi_k]_i$ and
	$u_i=y_i^k/[\pi_k]_i$ for all $i$, and $u=0$, we obtain
	\[\sum_{i=1}^n [\pi_k]_i\,\left\|\frac{y_i^k}{[\pi_k]_i} \right\|^2
	=\sum_{i=1}^n [\pi_k]_i\left\|\frac{y_i^k}{[\pi_k]_i} -\sum_{\ell=1}^n y_\ell^k\right\|^2 
	+\left\|\sum_{\ell=1}^n y_\ell^k\right\|^2
	=S^2(\by^k,\pi_k) +\left\|\sum_{\ell=1}^n y_\ell^k\right\|^2,\]
	where the last equality is obtained from the definition of $S(\by^k,\pi_k)$ (see~\eqref{eq-scaledy}).
	Hence, 
	\begin{equation}\label{eq-ypinorm}
	\|\by^k\|_{\pi_k^{-1}}
	=\sqrt{S^2(\by^k,\pi_k) +\left\|\sum_{\ell=1}^n y_\ell^k\right\|^2}
	\le S(\by^k,\pi_k) +\left\|\sum_{\ell=1}^n y_\ell^k\right\|,\end{equation}
	where the inequality is obtained by using $\sqrt{a+b}\le \sqrt{a}+\sqrt{b}$, 
	which is valid for any two scalars
	$a,b\ge0$. Thus, we	have established the first relation of the proposition.
	
	We next proceed to show the relation for $S(\by^{k+1},\pi_{k+1})$.
	By~\eqref{eq-ymatrix}, we have
% 	the alternative representation of the method, we have  (see~\eqref{eq-ymatrix}):
	\begin{equation*}
	\by^{k+1}\diag^{-1}(\pi_{k+1})=\bw^k\diag^{-1}(\pi_{k+1})
	+ (\bg^{k+1}-\bg^k)\diag^{-1}(\pi_{k+1})\qquad\hbox{for all }k\ge0.
	\end{equation*}
	By subtracting the vector 
	$\bs^{k+1}=(s^{k+1},\ldots,s^{k+1})$, where $s^{k+1}=\sum_{\ell=1}^n y_\ell^{k+1}$,
	from both sides of the preceding relation, we have for all $k\ge0,$
	\begin{equation*}
	\by^{k+1}\diag^{-1}(\pi_{k+1})-\bs^{k+1} 
	=\bw^k\diag^{-1}(\pi_{k+1})-\bs^k +(\bs^k-\bs^{k+1})+ (\bg^{k+1}-\bg^k)\diag^{-1}(\pi_{k+1}).
	\end{equation*}
	By taking $\pi_{k+1}$-induced norm on both sides of the preceding equality and 
	by using relation between $S(\by^{k+1},\pi_{k+1})$ and the $\pi_{k+1}$-induced norm
	(see~\eqref{eq-s-norm}),
	we have that
	\begin{align}\label{eq-srel1}
	&S(\by^{k+1},\pi_{k+1})
	%&=\|\by^{k+1}\diag^{-1}(\pi_{k+1})-\bs^{k+1} \|_{\pi_{k+1}}
	=
	\|\bw^k\diag^{-1}(\pi_{k+1})\!-\bs^k +(\bs^k\!-\bs^{k+1})+ (\bg^{k+1}\!-\bg^k)\diag^{-1}(\pi_{k+1})\|_{\pi_{k+1}}\nonumber \\
	\!&\le\! \|\bw^k\diag^{-1}(\pi_{k+1})\!-\!\bs^k\|_{\pi_{k+1}}\!+\!\|\bs^k\!-\!\bs^{k+1}\|_{\pi_{k+1}}\!+\|(\bg^{k+1}\!-\!\bg^k)\diag^{-1}(\pi_{k+1})\|_{\pi_{k+1}}.
	\end{align}
	
	We next consider $\|\bw^k\diag^{-1}(\pi_{k+1})-\bs^k\|_{\pi_{k+1}}$, for which 
	by using the definitions of $\bw^k$ and $\bs^k$, i.e.,
	$ \bw^k=(w_1^k,\ldots,w_n^k)$ and $\bs^k=(s^k,\ldots, s^k)$,
	we have that 
	\[\|\bw^k\diag^{-1}(\pi_{k+1})-\bs^k\|_{\pi_{k+1}}
	=\sqrt{\sum_{i=1}^n[\pi_{k+1}]_i \left\|\frac{w_i^k}{[\pi_{k+1}]_i}-s^k\right\|^2},\]
	where $s^k=\sum_{\ell=1}^n y_\ell^k$ (see~\eqref{eq-s-norm}). 
	We now apply Lemma~\ref{lem-basic-ycontract}
	with the following identification $\bbG=\bbG_k$, $B=B_k$, $\pi=\pi_{k+1}$, and $\nu=\pi_k$, 
	which yields
	\begin{align*}
	\sum_{i=1}^n[\pi_{k+1}]_i \left\|\frac{w_i^k}{[\pi_{k+1}]_i} - \sum_{\ell=1}^m y_\ell\right\|
	\le\tau_k 
	\sqrt{\sum_{i=1}^n [\pi_k]_i \left\|\frac{y_i}{[\pi_k]_i} - \sum_{\ell=1}^n y_\ell\right\|^2},
	\end{align*}	
	with
	$\tau_k=\sqrt{1 -   \frac{\min^2(\pi_k)\,b^2}
	{\max^2(\pi_k) \max(\pi_{k+1})\, \mathsf{D}(\bbG_k)\mathsf{K}(\bbG_k)} },$
	where we use $\min(B_k^+)\ge b$.
	Hence,
	\[\|\bw^k\diag^{-1}(\pi_{k+1})-\bs^k\|_{\pi_{k+1}}
	\le \tau_k\,
	\sqrt{\sum_{i=1}^n [\pi_k]_i \left\|\frac{y_i}{[\pi_k]_i} - \sum_{\ell=1}^n y_\ell\right\|^2}
	=\tau_k\,S(\by^k,\pi_k),\]
	where the equality follows from the definition of $S(\by^k,\pi_k)$ in~\eqref{eq-scaledy}.
	Thus, by substituting the preceding relation back in~\eqref{eq-srel1}, we have
	\begin{align}\label{eq-srel2}
	S(\by^{k+1}\!,\pi_{k+1})
	\!\le \tau_k S(\by^k,\pi_k)
	+\|\bs^k\!-\!\bs^{k+1}\|_{\pi_{k+1}}
	+\|(\bg^{k+1}\!-\!\bg^k)\diag^{-1}(\pi_{k+1})\|_{\pi_{k+1}}.
	\end{align}
	
	Next, we consider the term $\|\bs^k-\bs^{k+1}\|_{\pi_{k+1}}$ in~\eqref{eq-srel2},
	for which we have
	\[\|\bs^k-\bs^{k+1}\|_{\pi_{k+1}}=\sqrt{\sum_{i=1}^n [\pi_{k+1}]_i\|s^{k+1}-s^k\|^2}
	=\|s^{k+1}-s^k\|,\]
	where the last equality follows since the vector $\pi_{k+1}$ is stochastic.
	By the definition of $s^k$ in~\eqref{eq-s-norm}, we have 
	$s^k=\sum_{\ell=1}^n y_\ell^k$ . Since $B_k$ is column-stochastic,
	by Lemma~\ref{lem-sumgrad}(a) we further have 
	$\sum_{\ell=1}^n y_\ell^k=\sum_{\ell=1}^n \nabla f_\ell(x_\ell^k)$, implying that 
	\[\|\bs^k-\bs^{k+1}\|_{\pi_{k+1}}
	=\left\|\sum_{\ell=1}^n \left(\nabla f_\ell(x_\ell^{k+1})- \nabla f_\ell(x_\ell^k) \right)\right\|
	\le \sum_{\ell=1}^n \|\nabla f_\ell(x_\ell^{k+1})- \nabla f_\ell(x_\ell^k)\|.\]
	By using the Lipschitz continuity of the gradients $\nabla f_i$, we obtain
	\begin{align}\label{eq-est1}
	\|\bs^k-\bs^{k+1}\|_{\pi_{k+1}}
	\le L\sum_{\ell=1}^n \|x_\ell^{k+1} - x_\ell^k\|\le L\sqrt{n}\|\bx^{k+1}-\bx^k\|.\end{align}	
	For the term $\|(\bg^{k+1}-\bg^k)\diag^{-1}(\pi_{k+1})\|_{\pi_{k+1}}$ in relation~\eqref{eq-srel2},
% 	where $\bg^k=(\nabla f_1(x_1^k),\ldots,\nabla f_n(x_n^k))$ for all $k$, 
    we have
	\[\|(\bg^{k+1}-\bg^k)\diag^{-1}(\pi_{k+1})\|_{\pi_{k+1}}
	=\|\bg^{k+1}-\bg^k\|_{\pi_{k+1}^{-1}}
	=\sqrt{\sum_{i=1}^n   \frac{\|\nabla f_i(x_i^{k+1})- \nabla f_i(x_i^k)\|^2}{[\pi_{k+1}]_i}}. \]
	By the Lipschitz continuity property of the gradients $\nabla f_i(\cdot)$, 
	we obtain
	\begin{align}\label{eq-est2}
	\|(\bg^{k+1}\!\!-\!\bg^k)\diag^{-1}(\pi_{k+1})\|_{\pi_{k+1}}
	\!\le\! L\sqrt{\!\sum_{i=1}^n \!\frac{\|x_i^{k+1} \!\!-\! x_i^k\|^2}{[\pi_{k+1}]_i} }
	\!\le\! \frac{L}{\sqrt{\min(\pi_{k+1})}}\,\|\bx^{k+1}\!\!-\!\bx^k\|.
	%\sqrt{\sum_{i=1}^n \|x_i^{k+1} - x_i^k\|^2}.\]
	\end{align}
	Now, we combine the estimates in~\eqref{eq-est1} and~\eqref{eq-est2} with relation~\eqref{eq-srel2}
	and obtain that
	\begin{align*}
	S(\by^{k+1},\pi_{k+1})
	\le \tau_k\, S(\by^k,\pi_k)+L r_k\|\bx^{k+1}-\bx^k\|,
	\end{align*}
	where $r_k=\sqrt{n}+ \frac{1}{\sqrt{\min(\pi_{k+1})}}$.
	The desired relation follows from the preceding relation and the estimate for $\|\bx^{k+1}-\bx^k\|$
	in Lemma~\ref{lem-xdiff} (see \eqref{eq-xdiff}).
% 	, and re-arranging the resulting terms.
	\end{proof}
	
    %%%%%%%%%%%%%%%%%%%%%%%
	\section{Convergence Results}\label{sec-conver}
	%%%%%%%%%%%%%%%%%%%%%%%
	%--------------------------------------------------------
	In this section, we combine the results obtained in Sections~\ref{ssec-waver}--\ref{ssec-yvar}
	to obtain a composite relation for the main quantities of interest.
% 	Our goal is to express the relations of Proposition~\ref{prop-waverx}, Proposition~\ref{prop-xcontract} and Proposition~\ref{prop-ycontract} in terms of these quantities only.

	%--------------------------------------------------------
	\subsection{Composite Relation} \label{sec-comrel}
	%--------------------------------------------------------
	We first give the relations in a compact form by defining the vector $V_k$ as follows
	\begin{align}\label{eq-vk}
	V_k=\left(\|\hat x^k -x^*\|,D(\bx^k,\phi_k),S(\by^k,\pi_k)\right)^{\T},
	\end{align}
	which we recall below for convenience:
	\begin{align*}
		D(\bx^k,\phi_k)= \sqrt{\sum_{i=1}^n[\phi_k]_i\|x_i^k-\hat x^k\|^2},\quad S(\by^k,\pi_k) = \sqrt{\sum_{j=1}^n[\pi_k]_j\left\|\frac{y_j^k}{[\pi_k]_j} - \sum_{\ell=1}^n y_\ell^k\right\|^2},
	\end{align*}
	where $\hat x^k= \sum_{i=1}^n[\phi_k]_i x_i^k$ 
	and $x^*$ is the solution of the problem~\eqref{eq-problem}.
	Using Propositions~\ref{prop-waverx}(c),~\ref{prop-xcontract}, and~\ref{prop-ycontract}, we establish a relation between $V_{k+1}$ and $V_k$ that will involve the constants $c_k$, and $\tau_k$
	from Proposition~\ref{prop-xcontract} and
	Proposition~\ref{prop-ycontract}, given by
	\begin{subequations}\label{eq-constants}
	\begin{align}
	&\!\!\!\!q_k(\a) = \max\big\{|1-\a n \min(\pi_k)\mu|,|1-\a n \min(\pi_k)L|\big\},~
	r_k=\sqrt{n} + \frac{1}{\sqrt{\min(\pi_{k+1})}},
	\label{eq-qandrk}\\
	&\!\!\!\!c_k=\sqrt{1 \!-\! \frac{\min(\phi_{k+1}) \,a^2}
	{\max^2(\phi_k)\mathsf{D}(\bbG_k)\mathsf{K}(\bbG_k)} },
	\tau_k =\sqrt{1 \!-\! \frac{\min^2(\pi_k)\,b^2}
	{\max^2(\pi_k) \max(\pi_{k+1}\!) \mathsf{D}(\bbG_k)\mathsf{K}(\bbG_k)} }.
	\label{eq-ckandtauk}
	\end{align}
	\end{subequations}
	
	For the vector $V_k$ we  have the following result.
			
	\begin{proposition}\label{prop-main}
% 	Let the graph sequence $\{\bbG_k\}$ satisfy Assumption~\ref{asm-graphs}.
% 	Let $\{A_k\}$ and $\{B_k\}$ be matrix sequences satisfying, respectively, 
% 	Assumption~\ref{asm-amatrices} and
% 	Assumption~\ref{asm-bmatrices}. 
% 	Let the functions $f_i$ satisfy Assumption~\ref{asm-functions}, and 
    Let Assumptions~\ref{asm-graphs}-\ref{asm-strconv} hold. Consider the iterates produced by the $AB$/Push-Pull method in~\eqref{eq-met} 
	with the stepsize $\alpha\in(0,2(nL)^{-1})$, we have
% 	Then, for the vector $V_k$ defined in~\eqref{eq-vk}  we have	
	\[V_{k+1}\le M_k(\alpha)V_k\qquad \hbox{for all $k\ge0$},\]
	where $M_k(\alpha)$ is the matrix given by
	\begin{equation*}\label{eq-matrixm}
	 M_k(\a)=
	 \left[\begin{array}{ccc}
	 q_k(\a) & \a L \sqrt{n} \varphi_k & \a \cr
	 \hbox{}\cr
	 \a L \g_k \sqrt{n} \varphi_k & c_k + \a L \g_k \sqrt{n} \varphi_k  & \a \g_k\cr
	 \hbox{}\cr
	 \a L^2r_k \sqrt{n} \varphi_k & Lr_k(c_k\varphi_{k+1}+\varphi_k) + \a L^2r_k \sqrt{n} \varphi_k & \tau_k + \a Lr_k \end{array}\right].
	\end{equation*}
	with
	$\displaystyle \g_k=\sqrt{\max_{j\in[n]} ([\phi_{k+1}]_j[\pi_k]_j)} $,
	$\varphi_k=\sqrt{\frac{1}{\min(\phi_{k})}}$, and $q_k(\a)$, $c_k$, $\tau_k$, and $r_k$ as in~\eqref{eq-constants}. 
% 	Here, $\{\phi_k\}$ and $\{\pi_k\}$ are the stochastic vector sequences associated with $\{A_k\}$ and $\{B_k\}$, respectively (see Lemma~\ref{lem-amatrices} and equation~\eqref{eq-pik}), and $L>0$ is the gradient Lipschitz constant of the functions $f_i$ from Assumption~\ref{asm-functions}.
	\end{proposition}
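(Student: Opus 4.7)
\medskip

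\noindent\textbf{Proof plan for Proposition~\ref{prop-main}.}
The plan is to assemble the three scalar recursions already proved in Sections~\ref{ssec-waver}--\ref{ssec-yvar} into a single componentwise inequality $V_{k+1}\le M_k(\a)V_k$, by re-expressing each right-hand side purely in terms of the three scalars $\|\hat x^k-x^*\|$, $D(\bx^k,\phi_k)$, and $S(\by^k,\pi_k)$. The only auxiliary quantity that appears on the right-hand sides of Propositions~\ref{prop-waverx}(b), \ref{prop-xcontract}, and \ref{prop-ycontract} but is not already part of $V_k$ is a norm involving $\by^k$, so all the work reduces to bounding two such norms in terms of the coordinates of $V_k$.

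The first row of $M_k(\a)$ is immediate. I would read off Proposition~\ref{prop-waverx}(b) and rewrite $\sqrt{n/\min(\phi_k)}$ as $\sqrt{n}\,\varphi_k$; the condition $\a\in(0,2(nL)^{-1})$ implies $q_k(\a)<1$ since $n[\pi_k]_i\le n\cdot 1$ and $\min(\pi_k)>0$, so the bound in Proposition~\ref{prop-waverx}(b) applies and yields exactly the first row. For the second row, I would start from Proposition~\ref{prop-xcontract} and confront the lone non-$V_k$ term $\sqrt{\sum_i[\phi_{k+1}]_i\|y_i^k-\sum_j[\phi_{k+1}]_j y_j^k\|^2}$. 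The key step is the estimate
\[
\sum_{i=1}^n[\phi_{k+1}]_i\Big\|y_i^k-\sum_{j=1}^n[\phi_{k+1}]_j y_j^k\Big\|^2
\le \sum_{i=1}^n[\phi_{k+1}]_i\|y_i^k\|^2
= \sum_{i=1}^n\bigl([\phi_{k+1}]_i[\pi_k]_i\bigr)\,[\pi_k]_i\Big\|\frac{y_i^k}{[\pi_k]_i}\Big\|^2
\le \g_k^{\,2}\,\|\by^k\|_{\pi_k^{-1}}^2,
\]
which combines the variance inequality with pulling out the maximum of $[\phi_{k+1}]_i[\pi_k]_i$. Then I invoke the first identity of Proposition~\ref{prop-ycontract}, $\|\by^k\|_{\pi_k^{-1}}\le S(\by^k,\pi_k)+\|\sum_\ell y_\ell^k\|$, and finally Lemma~\ref{lem-xdiff} to bound $\|\sum_\ell y_\ell^k\|\le L\sqrt{n}\,\varphi_k(\|\hat x^k-x^*\|+D(\bx^k,\phi_k))$. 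Multiplying by $\a$ and adding $c_k D(\bx^k,\phi_k)$ reproduces the second row of $M_k(\a)$ term-by-term.

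The third row is handled analogously from Proposition~\ref{prop-ycontract}. The only non-$V_k$ quantity there is $\|\by^k\|$, which I bound by first noting $[\pi_k]_i\le 1$ so $\|\by^k\|\le\|\by^k\|_{\pi_k^{-1}}$, then applying the same decomposition $\|\by^k\|_{\pi_k^{-1}}\le S(\by^k,\pi_k)+\|\sum_\ell y_\ell^k\|$ and Lemma~\ref{lem-xdiff}. Substituting into the bound for $S(\by^{k+1},\pi_{k+1})$ and collecting terms with $\|\hat x^k-x^*\|$, $D(\bx^k,\phi_k)$, and $S(\by^k,\pi_k)$ yields the third row with coefficients $\a L^2 r_k\sqrt{n}\,\varphi_k$, $Lr_k(c_k\varphi_{k+1}+\varphi_k)+\a L^2 r_k\sqrt{n}\,\varphi_k$, and $\tau_k+\a Lr_k$ respectively.

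The main obstacle is purely bookkeeping: tracking which factors of $\sqrt{n}$, $\varphi_k$, $\varphi_{k+1}$, and $\g_k$ arise in each substitution, especially in row~2 where the passage from a $\phi_{k+1}$-weighted dispersion of $\by^k$ to a $\pi_k^{-1}$-weighted norm is the place $\g_k$ enters. Once that estimate is in hand, the rest is direct substitution; no further assumption beyond Assumptions~\ref{asm-graphs}--\ref{asm-strconv} and the stepsize bound $\a<2/(nL)$ (needed only to guarantee $q_k(\a)<1$ via Proposition~\ref{prop-waverx}(b)) is required.
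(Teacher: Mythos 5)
Your proposal is correct and follows essentially the same route as the paper's proof: the first row is read off from Proposition~\ref{prop-waverx}(b), the second uses the variance inequality~\eqref{eq-gen-aver} plus the multiply-and-divide by $[\pi_k]_i$ step to introduce $\g_k\|\by^k\|_{\pi_k^{-1}}$, and both the second and third rows are closed via $\|\by^k\|\le\|\by^k\|_{\pi_k^{-1}}\le S(\by^k,\pi_k)+\|\sum_\ell y_\ell^k\|$ together with Lemma~\ref{lem-xdiff}. The bookkeeping of the coefficients matches the matrix $M_k(\a)$ entry by entry, so nothing is missing.
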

	\begin{proof}
% 	By Proposition~\ref{prop-waverx}(b), when $\alpha\in(0,2(nL)^{-1})$, the following relation holds
% 	\begin{align}\label{eq-m0}
% 	\|\hat x^{k+1}-x^*\| \le q_k(\a) \|\hat x^k - x^*\| + \a L\sqrt{\dfrac{n}{\min (\phi_{k})}}D(\bx^k,\phi_k)+\a S(\by^k,\pi_k),
% 	\end{align}
% 	with $q_k(\a)=\max\{|1-\a n\mu|,|1-\a nL|\}<1$, which gives the first row of $M_k(\a)$.
	The first row of $M_k(\a)$ is given by Proposition~\ref{prop-waverx}(b) when $\alpha\in(0,2(nL)^{-1})$.
	Next, we consider the relation for $D(\bx^{k+1},\phi_{k+1})$. 
	By Proposition~\ref{prop-xcontract}, we have that 
	\begin{equation}\label{eq-fin1}
	D(\bx^{k+1},\phi_{k+1})\le 
	c_k D(\bx^k,\phi_k)
	+ \a \sqrt{\sum_{i=1}^n [\phi_{k+1}]_i \left\|y_i^k - \sum_{j=1}^n[\phi_{k+1}]_j y_j^k\right\|^2}.
	\end{equation}
	Using relation~\eqref{eq-gen-aver} with $\g_i=[\phi_{k+1}]_i$, $u_i=y_i^k$ for all $i$ and $u=0$,
	it follows that
% 	\[\sum_{i=1}^n [\phi_{k+1}]_i \left\|y_i^k - \sum_{j=1}^n[\phi_{k+1}]_j y_j^k\right\|^2
% 	\le \sum_{i=1}^n [\phi_{k+1}]_i \left\|y_i^k - u\right\|^2
% 	\qquad\hbox{for all }u\in\re^p.\]
% 	Thus, by choosing $u=0$, we obtain that 
	\[\sum_{i=1}^n [\phi_{k+1}]_i \left\|y_i^k - \sum_{j=1}^n[\phi_{k+1}]_j y_j^k\right\|^2
	\le \sum_{i=1}^n [\phi_{k+1}]_i \left\|y_i^k \right\|^2.\]
	By multiplying and dividing each term in the summation on the right hand side with $[\pi_k]_i$,
	we find that 
	\[\sum_{i=1}^n [\phi_{k+1}]_i \left\|y_i^k -\! \sum_{j=1}^n[\phi_{k+1}]_j y_j^k\right\|^2 \!\!
	\le \sum_{i=1}^n [\phi_{k+1}]_i[\pi_k]_i \frac{\left\|y_i^k \right\|^2}{[\pi_k]_i} 
	\le \max_{j\in[n]} ([\phi_{k+1}]_j[\pi_k]_j) \sum_{i=1}^n\frac{\left\|y_i^k \right\|^2}{[\pi_k]_i}.\]
	Therefore, by taking the square roots on both sides of the preceding relation, we obtain
	\[\sqrt{\sum_{i=1}^n [\phi_{k+1}]_i \left\|y_i^k - \sum_{j=1}^n[\phi_{k+1}]_j y_j^k\right\|^2}
	\le \sqrt{\max_{j\in[n]} ([\phi_{k+1}]_j[\pi_k]_j)} \sqrt{\sum_{i=1}^n\frac{\left\|y_i^k \right\|^2}{[\pi_k]_i}}
	=\g_k \|\by^k\|_{\pi_k^{-1}},\]
	where the equality follows upon using $\g_k=\sqrt{\max_{j\in[n]} ([\phi_{k+1}]_j[\pi_k]_j)}$ and the definition of $\|\by^k\|_{\pi_k^{-1}}$.
	Substituting the preceding estimate back in relation~\eqref{eq-fin1}, we find that,
	\begin{equation*}
	D(\bx^{k+1},\phi_{k+1})\le 
	c_k D(\bx^k,\phi_k)
	+ \a \g_k \|\by^k\|_{\pi_k^{-1}} ~\text{for all}~ k\ge0.\end{equation*}
	Using the preceding relation, the relation
	$\|\by^k\|_{\pi_k^{-1}}
	\le S(\by^k,\pi_k) +\left\|\sum_{\ell=1}^n y_\ell^k\right\|$ established in 
	Proposition~\ref{prop-ycontract}, and the following relation from Lemma~\ref{lem-xdiff}	
	\begin{align} \label{eq-sumy}
	\left\|\sum_{i=1}^n y_i^k\right\|
	\le L\sqrt{\frac{n}{\min(\phi_k)}} \left(\|\hat x^k - x^*\| + D(\bx^k,\phi_k)\right),
	\end{align}
	we obtain the desired relation for $D(\bx^{k+1},\phi_{k+1})$ (given by the second row of $M_k(\a)$).
	
	Lastly, the relation for $S(\by^{k+1},\pi_{k+1})$ comes from  Proposition~\ref{prop-ycontract}.
% 	as follows
% 	\begin{align*}
% 	S(\by^{k+1}\!,\pi_{k+1}) \!\le\! \tau_k S(\by^k\!,\pi_k) \!+\! \alpha L r_k \|\by^k\|\!+\!L r_k\! \left(\!c_k\sqrt{\frac{1}{\min(\phi_{k+1})}}\!+\!\sqrt{\frac{1}{\min(\phi_k)}} \right) \!D(\bx^k,\phi_k).
% 	\end{align*}
	For the quantity $\|\by^k\|$, using the vector-induced norm property in~\eqref{eq-stochvec-norm} and the fact that the vector $\pi_k$ is stochastic for all $k$, we have $
	\|\by^k\|\le \|\by^k\|_{\pi_k^{-1}}$. Upon using the relations $\|\by^k\|_{\pi_k^{-1}}
	\le S(\by^k,\pi_k) +\left\|\sum_{\ell=1}^n y_\ell^k\right\|$ established in 
	Proposition~\ref{prop-ycontract} and \eqref{eq-sumy}, we obtain
	\begin{align*}
	S(\by^{k+1},\pi_{k+1})&\le \left(\tau_k+\a Lr_k\right)\, S(\by^k,\pi_k) + \a L^2 r_k \sqrt{\frac{n}{\min(\phi_{k})}}\|\hat x^{k} -x^*\|\cr
	&+L r_k \left(c_k\sqrt{\frac{1}{\min(\phi_{k+1})}}+\sqrt{\frac{1}{\min(\phi_k)}} +\a L\sqrt{\frac{n}{\min(\phi_{k})}} \right) D(\bx^k,\phi_k),
	\end{align*}
	which gives the third row of $M_k(\a)$.
	\end{proof}
	
	\subsection{Convergence Result and Range for the Stepsize}
	From Proposition~\ref{prop-main}, to prove that $V_k$ tends to 0 at a geometric rate, it is sufficient to show that $M_k(\a)\le M(\a)$ for some matrix $M(\a)$, and then choose a suitable stepsize $\a\in(0,2(nL)^{-1})$
	such that the eigenvalues of $M(\a)$ are inside the unit circle, i.e., the spectral radius of $M(\a)$ is less than 1.
	
% 	and the range of stepsize $\a $ that ensure the convergence of the $AB$/Push-Pull method in~\eqref{eq-met}
	We now determine an upper bound matrix $M(\a)$ for $M_k(\a )$. Let $c\in(0,1)$, $\tau\in(0,1)$, $r$, and $\varphi$, be upper bounds for $c_k$, $\tau_k$, $r_k$, and $\varphi_k$, respectively, i.e.,
	\begin{align}\label{eq-maxconst}
	&\max_{k\ge0}c_k\le c,\quad \max_{k\ge0}\tau_k\le \tau,\quad \max_{k\ge0}r_k\le r,\quad \max_{k\ge0} \varphi_k \le \varphi.
	\end{align}
	For the quantity $q_k(\a)$ as in \eqref{eq-qandrk}, when $\a\in(0,2(nL+n\mu)^{-1})$, we have $q_k(\a)=1-\a n \min(\pi_k)\mu <1$. Let $\sigma$ be a lower bound for $\min(\pi_k)$, $k\ge0$, corresponding to the graph sequence $\{\mathbb{G}_k\}$. In the most general case of graph sequences, by Lemma~\ref{lem-bmatrices} we have that  $\sigma \le \min_{k\ge0}\{\min(\pi_k)\}$ with $\sigma \ge \tfrac{b^n}{n} >0$. Thus, we have the following upper bound for $q_k(\a)$:
	\begin{align}\label{eq-maxconst2}
	\max_{k\ge0}q_k(\a) \le 1-\a n \sigma \mu\in (0,1)\qquad \text{ where }\quad \sigma \le \min_{k\ge0}\{\min(\pi_k)\}.
	\end{align}
	We notice also that $\g_k=\max_{j\in[n]} ([\phi_{k+1}]_j[\pi_k]_j)\le 1$ since $\phi_k$ and $\pi_k$ are stochastic vectors.
	Using these upper-bounds, for $\a\in(0,2(nL+n\mu)^{-1})$, we have $M_k(\a )\le M(\a)$, for all $k \ge 0$, with the matrix $M(\a)$ given by
    \begin{equation}\label{eq-gmatrixm}
	M(\a)=
	\left[\begin{array}{ccc}
	 1-\a n \sigma \mu & \a L \sqrt{n} \varphi & \a \cr
	 \hbox{}\cr
	 \a L\sqrt{n}\varphi & c + \a L\sqrt{n}\varphi  & \a \cr
	 \hbox{}\cr
	 \a L^2r \sqrt{n} \varphi & Lr(1+c)\varphi + \a L^2r \sqrt{n} \varphi & \tau + \a Lr \end{array}\right].
	\end{equation}
	
	\begin{proposition}
	Let Assumptions~\ref{asm-graphs}-\ref{asm-strconv} hold. Consider the iterates produced by the method in~\eqref{eq-met} and the notation in \eqref{eq-maxconst}-\eqref{eq-maxconst2}. If the stepsize $\a>0$ is chosen such that 
	\begin{align}\label{eq:alpha-range}
    \a \le \min \left\{\dfrac{1-c}{L\sqrt{n}\varphi},~\dfrac{1-\tau}{Lr},~\dfrac{n\sigma\mu(1-\tau)(1-c)}{\eta},\dfrac{2}{n(L+\mu)}\right\},
    \end{align}
    where $\eta = L(n\sigma\mu+L\sqrt{n}\varphi)\left((1+c)r\varphi+(1-c)r+(1-\tau)\sqrt{n}\varphi\right)>0$. Then, 
    \[\lim_{k\to\infty}\|x_i^k -x^*\|=0,\qquad \text{for all } i\in [n].\]
	\end{proposition}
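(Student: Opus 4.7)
The plan is to combine Proposition~\ref{prop-main} with the uniform upper bounds in \eqref{eq-maxconst}--\eqref{eq-maxconst2} to obtain a constant-coefficient linear recursion, and then show that the associated iteration matrix is a contraction. Specifically, for $\a \in (0, 2(n(L+\mu))^{-1}) \subset (0, 2(nL)^{-1})$ we have $M_k(\a) \le M(\a)$ entrywise for all $k$, with $M(\a)$ as in \eqref{eq-gmatrixm}; since $M_k(\a)$ and $M(\a)$ are nonnegative and $V_k \ge 0$, Proposition~\ref{prop-main} yields $V_{k+1} \le M(\a) V_k$ and hence $V_k \le M(\a)^k V_0$ for all $k \ge 0$. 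Thus it suffices to show $\rho(M(\a)) < 1$, because then $M(\a)^k V_0 \to 0$ geometrically, which in turn drives each component of $V_k$ to zero.

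The key step is to verify that the stepsize range \eqref{eq:alpha-range} forces $\rho(M(\a)) < 1$. Since $M(\a)$ is entrywise nonnegative, I plan to exhibit a strictly positive vector $\delta = (\delta_1, \delta_2, \delta_3)^{\T} > 0$ such that $M(\a)\delta < \delta$ componentwise; by a standard Perron--Frobenius argument this implies $\rho(M(\a)) < 1$. A natural choice is to let $\delta_2$ and $\delta_3$ absorb the consensus scales $(1-c)$ and $(1-\tau)$ respectively, while $\delta_1$ absorbs the contraction margin $n\sigma\mu$; concretely, something like $\delta_1 = 1$, $\delta_2 \asymp (1-c)/(L\sqrt{n}\varphi)$, and $\delta_3 \asymp (1-\tau)/(Lr)$, suitably renormalized. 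The three inequalities $[M(\a)\delta]_i < \delta_i$ will reduce to:
\begin{itemize}
\item Row~1: the diagonal contribution $1-\a n\sigma\mu$ together with the off-diagonal terms involving $D$ and $S$, which contributes one linear constraint in $\a$.
\item Row~2: the condition $c + \a L\sqrt{n}\varphi < 1$ (the first bound in \eqref{eq:alpha-range}) plus off-diagonal contributions.
\item Row~3: the condition $\tau + \a Lr < 1$ (the second bound in \eqref{eq:alpha-range}) plus off-diagonal contributions.
\end{itemize}
The off-diagonal contributions are all linear in $\a$, and balancing them simultaneously produces the third bound $\a \le n\sigma\mu(1-\tau)(1-c)/\eta$, where $\eta$ is precisely the quantity defined in the proposition. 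The fourth bound $\a \le 2/(n(L+\mu))$ ensures that $q_k(\a) = 1 - \a n\min(\pi_k)\mu$ so that the first-row diagonal entry has the simple contractive form used in $M(\a)$.

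The main technical obstacle is the bookkeeping in constructing $\delta$ and verifying the three inequalities with constants matching exactly the definition of $\eta$; in particular, one must keep the factor $(1+c)r\varphi + (1-c)r + (1-\tau)\sqrt{n}\varphi$ intact while combining the second- and third-row constraints with the coupling to the first row through the cross terms $\a L\sqrt{n}\varphi$ and $\a L^2 r\sqrt{n}\varphi$. Once $\rho(M(\a))<1$ is established, the remaining step is short: from $V_k \to 0$ we obtain $\|\hat x^k - x^*\| \to 0$ and $D(\bx^k,\phi_k) \to 0$. Using the uniform lower bound $[\phi_k]_i \ge a^n/n$ from Lemma~\ref{lem-amatrices}, the definition $D(\bx^k,\phi_k)^2 = \sum_i [\phi_k]_i \|x_i^k - \hat x^k\|^2$ gives $\|x_i^k - \hat x^k\| \le \sqrt{n/a^n}\, D(\bx^k,\phi_k) \to 0$ for every $i$, so by the triangle inequality $\|x_i^k - x^*\| \le \|x_i^k - \hat x^k\| + \|\hat x^k - x^*\| \to 0$ for all $i \in [n]$, completing the proof.
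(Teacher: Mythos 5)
Your architecture coincides with the paper's up to the decisive step: both pass from $V_{k+1}\le M_k(\a)V_k$ (Proposition~\ref{prop-main}) to $V_{k+1}\le M(\a)V_k$ with $M(\a)$ as in \eqref{eq-gmatrixm}, reduce the claim to $\rho(M(\a))<1$, and then recover $\|x_i^k-x^*\|\to0$ from $V_k\to0$ (your final step, via the lower bound $[\phi_k]_i\ge a^n/n$, is actually spelled out more carefully than in the paper). The difference is how $\rho(M(\a))<1$ is certified. The paper invokes Lemma~8 of \cite{pshi21} (for a $3\times3$ nonnegative irreducible matrix with diagonal entries below $1$, one has $\rho<1$ if and only if $\det(\mbi-M(\a))>0$), computes $\det(M(\a)-\mbi)=\a\left[\a\eta-n\sigma\mu(1-\tau)(1-c)\right]$ by direct expansion, and reads off the three nontrivial terms of \eqref{eq:alpha-range} from the two diagonal conditions and the determinant condition. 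You propose instead to exhibit $\delta>0$ with $M(\a)\delta<\delta$.

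That criterion is sound in principle, but the step you defer as ``bookkeeping'' is precisely where the stated range is either obtained or lost, and with the $\a$-independent $\delta$ you sketch it is not obtained. Writing out the three inequalities: row~1 reduces to $L\sqrt{n}\varphi\,\delta_2+\delta_3<n\sigma\mu\,\delta_1$ (the $\a$'s cancel), and row~3 in the limit $\a\to0^+$ already forces $Lr(1+c)\varphi\,\delta_2<(1-\tau)\delta_3$; these constrain $\delta$ independently of $\a$, and the remaining linear-in-$\a$ conditions from rows~2 and~3 then certify a stepsize range that depends on $\delta$ and is generically strictly smaller than \eqref{eq:alpha-range} --- your candidate $\delta_2\asymp(1-c)/(L\sqrt{n}\varphi)$, $\delta_3\asymp(1-\tau)/(Lr)$ does not visibly reproduce the constant $\eta$. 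Matching the exact range would require taking $\delta$ to be (a perturbation of) the Perron vector of $M(\a)$ at the critical stepsize, i.e., solving $(\mbi-M(\a))\delta=\varepsilon\1$, which amounts to redoing the determinant computation in disguise. So either carry out that construction explicitly and verify the three inequalities for every $\a$ satisfying \eqref{eq:alpha-range}, or switch to the determinant criterion as the paper does; as written, the assertion that your row conditions ``reduce to'' the stated bounds with the stated $\eta$ is the unproved quantitative core of the proposition.
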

	\begin{proof}
	Recall that by Proposition~\ref{prop-main}, we have $V_{k+1}\le M_k(\a)V_k$, for all $k\ge 0$. Therefore, with the matrix $M(\a)$ defined as in \eqref{eq-gmatrixm}, we have
	\begin{equation}\label{eq-vkrel}
	    V_{k+1}\le M(\a)V_k, \qquad \hbox{for all $k\ge0$}.
	\end{equation}
	Thus, $\|\hat x^k -x^*\|,~D(\bx^k,\phi_k)$ and $S(\by^k,\pi_k)$ all converge to 0 linearly at rate $\mathcal{O}\left(\rho_M^k\right)$ if the spectral radius $\rho_{M(\a)}$ of $M(\a)$ satisfies $\rho_{M(\a)}<1$. By Lemma~8 of~\cite{pshi21}, we will have $\rho_{M(\a)}<1$ if all diagonal entries of $M(\a)$ are less than 1 and $\det(\mbi-M(\a))>0$, where 
    \begin{align*}
	\det(M(\a)-\mbi)&=
	\left| \begin{array}{ccc}
	-\a n \sigma \mu & \a L \sqrt{n} \varphi & \a \cr
	\hbox{}\cr
	\a L\sqrt{n}\varphi & c + \a L\sqrt{n}\varphi-1  & \a \cr
	\hbox{}\cr
	\a L^2r \sqrt{n} \varphi & Lr(1+c)\varphi + \a L^2r \sqrt{n} \varphi & \tau + \a Lr-1 \end{array}\right|.
	\end{align*}
	Hence,
	\begin{align*}
	\det(M(\a)-\mbi)=\a\left[\a \eta-n\sigma\mu(1-\tau)(1-c)\right],
	\end{align*}
	where $\eta = L(n\sigma\mu+L\sqrt{n}\varphi)\left((1+c)r\varphi+(1-c)r+(1-\tau)\sqrt{n}\varphi\right)>0$ since $c<1$ and $\tau<1$. It remains to choose $\a\in(0,2(nL+n\mu)^{-1})$ so that the following conditions are satisfied
    $$\begin{cases} 
    c+\a L\sqrt{n}\varphi<1 \\
    \tau + \a Lr< 1\\
    \a \eta-n\sigma\mu(1-\tau)(1-c)<0.
    \end{cases}$$
    Solving the preceding system of inequalities yields the range in \eqref{eq:alpha-range}.
    % which yield
    % \begin{align}\label{eq:alpha-range}
    % \a \le \min \left\{\dfrac{1-c}{L\sqrt{n}\varphi},~\dfrac{1-\tau}{Lr},~\dfrac{n\mu(1-\tau)(1-c)}{\eta},\dfrac{2}{n(L+\mu)}\right\}
    % \end{align}
    % where $\eta = L(n\mu+L\sqrt{n}\varphi)\left((1+c)r\varphi+(1-c)r+(1-\tau)\sqrt{n}\varphi\right)$ is a positive constant.
    \end{proof}
    
    \begin{remark}
    We can relax Assumption~\ref{asm-graphs} by considering a $C$-strongly-connected graph sequence, i.e., there exists some integer, $C\ge 1$ such that the graph with edge set
    $\mathcal{E}^C_k=\bigcup_{i=kC}^{(k+1)C-1}\mathcal{E}_i$
    is strongly connected for every $k\ge 0$. In this case, the more general results of Lemma~\ref{lem-amatrices} and Lemma~\ref{lem-bmatrices} state that there exist stochastic vector sequences $\{\phi_k\}$ and $\{\pi_k\}$, such that for all $k \ge 0$, 
    $$\phi'_{k+C}\left(A_{k+C-1}\ldots A_{k+1}A_{k}\right)=\phi'_k ~\text{and}~ \pi_{k+C}=\left(B_{k+C-1}\ldots B_{k+1}B_{k}\right)\pi_k.$$
  Furthermore,
    $$[\phi_k]_i\ge\dfrac{a^{nC}}{n} ~\text{and}~[\pi_k]_i\ge\dfrac{b^{nC}}{n} \quad \text{for all } i\in[n].$$
    With the use of these results, the rest of convergence analysis follows similarly to our analysis for strongly connected graphs, by noticing that contractions due to row- and column-stochastic matrices occur after time $k=C$. 
    %However, we notice that instead of considering the update every one step, we consider the update every $C$ steps. 
\end{remark}

    \section{Numerical Simulations}
    %%%%%%%%%%%%%%%%%%%%%%%
    
	In this section, we evaluate the performance of the proposed algorithm through a sensor fusion problem over a network, as described in \cite{Jinming2018}. The estimation problem is given as follows
	$$\min_{x\in \mathbb{R}^p}~ \sum\limits_{i=1}^n \left(\|z_i-H_ix\|^2+\lambda_i \|x\|^2\right),$$
    where $x$ is the unknown parameter to be estimated, $H_i \in \mathbb{R}^{s\times p}$ represents the measurement matrix, $z_i=H_ix+\omega_i \in \mathbb{R}^{s}$ is the noisy observation of sensor $i$ with some noise $\omega_i$ and $\lambda_i$ is the regularization parameter for the local cost function of sensor $i$.
    
    As in \cite{pshi21}, we set $n=20$, $p=20$ and $s=1$ so that each local cost function is ill-conditioned, requiring the coordination among agents to achive fast convergence. The measurement matrix $H_i$ is generated from a uniform distribution in the unit $\re^{s\times p}$ space which is then normalized such that its Lipschitz constant is equal to $1$. The noise $\omega_i$ follows an i.i.d. Gaussian process with zero mean and unit variance $\mathcal{N}(0,1)$. The regularization parameter is chosen to be $\lambda_i=0.01$, for all $i\in [n]$, to ensure the strong convexity of the loss function.
    
    \sidecaptionvpos{figure}{c}
    \begin{SCfigure}[50][h!]
    \caption{Residuals plot}\label{fig:residual}
    ~~~\includegraphics[width=0.55\textwidth]{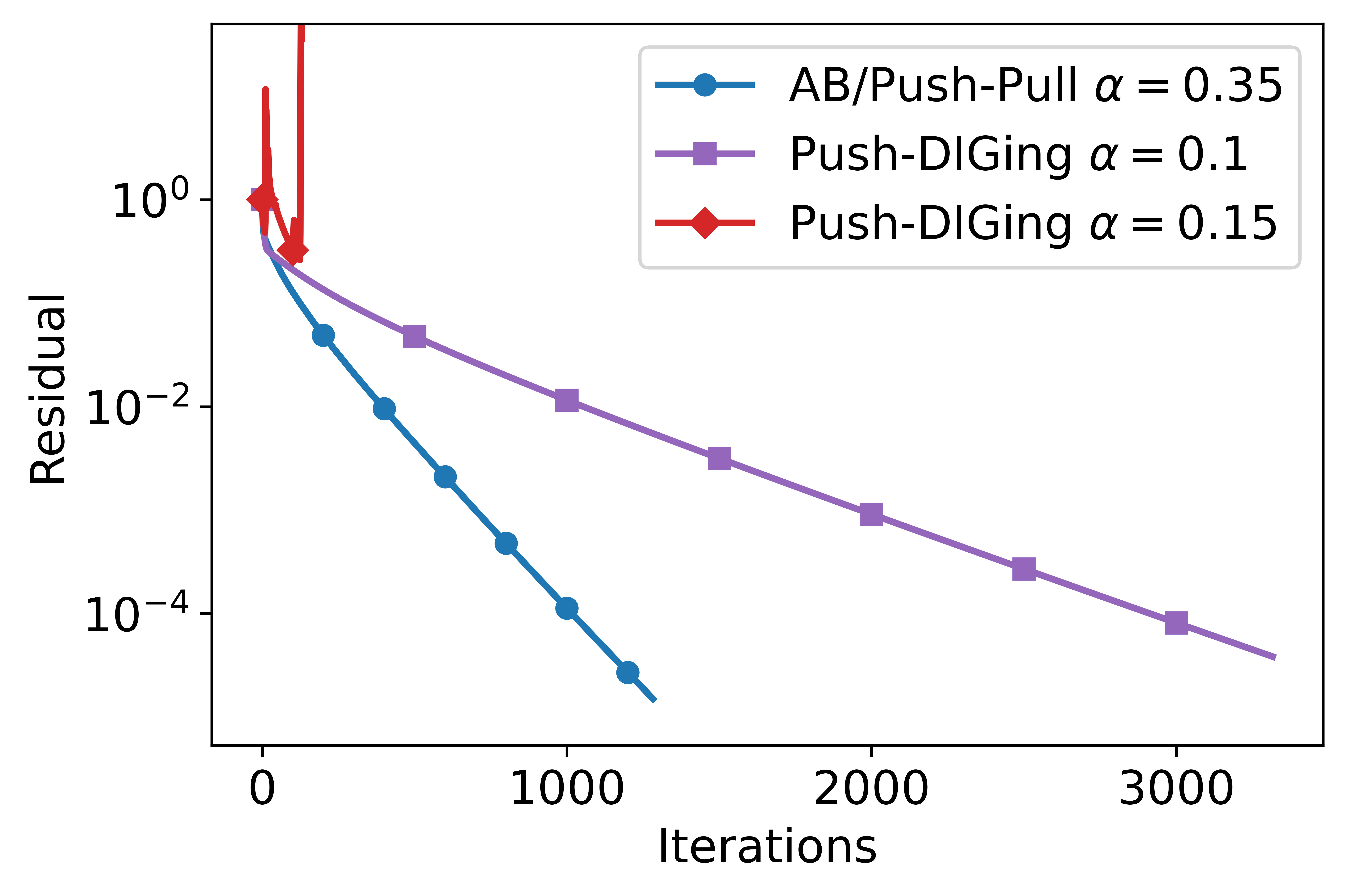}
    \end{SCfigure}
    
    We compare our proposed AB/Push-Pull algorithm against Push-DIGing \cite{nedic2017achieving}.
    % and Xi-Row \cite{xi2018linear}.
    The simulation is carried out over a random sequence of time-varying directed communication network 
    % for AB/Push-Pull and Push-DIGing algorithm while the network is directed static for Xi-Row (since the algorithm does not converge to the optimal solution when the network is time-varying)
    . The performance is compared in terms of the relative residual defined as $\tfrac{\|\bx^k-\bx^*\|^2_2}{\|\bx^0-\bx^*\|^2_2}$. Figure~\ref{fig:residual} illustrates the performance of the above algorithms under a randomly generated time-varying network. As discussed in \cite{pshi21}, AB/Push-Pull allows for much larger value of the stepsize compared to Push-DIGing and it converges faster especially for ill-conditioned problems and when graphs are not well balanced.

	%%%%%%%%%%%%%%%%%%%%%%%
	\section{Conclusions}\label{sec: conclusion}
	%%%%%%%%%%%%%%%%%%%%%%%
In this paper, we study a distributed optimization problem over a time-varying directed communication network. We consider the $AB$/Push-Pull gradient-based method where each node maintains estimates of the optimal decision variable and the average gradient of the agents’ objective functions. The information about the decision variable is pushed to its neighbors, while the information about the gradients is pulled from its neighbors using both row- and column-stochastic weights simultaneously. We explore the contractive properties of the iterates produced by the method, which are inherited from the use of the mixing terms and the fact that the mixing matrices are compliant with a directed strongly connected graph. We prove that the algorithm converges linearly to the global minimizer for smooth and strongly convex cost functions. The convergence result is derived based on the choice of appropriate stepsize values for which explicit upper bounds are provided in terms of the properties of the cost functions, the mixing matrices, and the graph connectivity structure.

\end{document}